\newtheorem{theorem}{Theorem}[section]
\newtheorem{lemma}[theorem]{Lemma}
\newtheorem{corollary}[theorem]{Corollary}
\theoremstyle{definition}
\newtheorem{example}[theorem]{Example}
\theoremstyle{remark}
\newtheorem{remark}[theorem]{Remark}
\numberwithin{equation}{section}
\begin{document}
\setcounter{page}{1}
\title[Borg's Theorem]{A Note on Discrete Borg-type Theorems}

\author[V. B. Kiran Kumar, G. Krishna Kumar]{V. B. Kiran Kumar$^{1}$, G. Krishna Kumar$^{2}$}

\address{$^{1}$ Cochin University of Science And Technology, Kerala, India.}
\email{kiranbalu36@gmail.com}

\address{$^{2}$ Department of Mathematics,
BJM Govt. College, Chavara, Kerala, India.}
\email{krishna.math@gmail.com}


\subjclass[2010]{Primary 47B35; Secondary 47B36.}

\keywords{Spectrum; Schr$\ddot{\mbox{o}}$dinger operator ; Laurent-Toeplitz operator; Borg-type Theorems}

\date{Received: xxxxxx; Revised: yyyyyy; Accepted: zzzzzz.
\newline \indent $^{1}$ Corresponding author}

\baselineskip=21pt
\vspace*{-1 cm}

\begin{abstract}
We consider the discrete versions of the well-known Borg's Theorem and use simple linear algebraic techniques
to obtain new versions of the discrete Borg-type theorems. To be precise, we prove that the periodic potential
of a discrete Schr$\ddot{\mbox{o}}$dinger operator is almost a constant if and only if the possible spectral gaps
of the operator are of small width. This result is further extended to more general settings and the connection 
to the well-known Ten Martini problem is also discussed. 
\end{abstract}
 \maketitle
\section{Introduction}\label{intro}
Borg's theorem is a classical result in inverse spectral theory which has connections with several problems in quantum mechanics.
In $1946$, G. Borg proved that the periodic potential of a Schr$\ddot{\mbox{o}}$dinger operator is constant almost everywhere
if and only if the essential spectrum of the operator is connected (see \cite{borg}). For mathematicians, this is a very 
interesting result as it determines the nature of the potential from the given spectral data of the operator. 
This result is used as an important tool for various problems in several other branches of science, for example, computing 
the density of a guitar string from its frequency data, to determine the shape of a drum etc. (see \cite{shape}). 
The discrete versions of Borg's theorem were also considered by several mathematicians. In $1975$, H. Flaschka discovered 
the discrete Borg's theorem (see \cite{Fla}). There were several modifications and generalizations of this result, which we refer
as Borg-type theorems. In \cite{kiran}, the authors gave a pure linear algebraic approach to prove the discrete Borg's theorem.
The techniques developed in \cite{kiran} were used to obtain discrete Borg-type theorems in \cite{KSN1}. 
The techniques originally trace back to the classical Bloch/Floquet theory (see \cite{Embree} for an elegant numerical 
illustration of such techniques).

In this article, we prove that {\it the possible spectral gaps of the discrete Schr$\ddot{\mbox{o}}$dinger operator
are of small width if and only if the periodic potential is almost a constant}. This result is further extended to
general Jacobi operators and block Laurent operators. Finally, we connect the results developed with the well-known
Ten Martini problem (see \cite{Avila}). 

Consider the Schr$\ddot{\mbox{o}}$dinger operator $\tilde{A}$ on a suitable subspace of $L^2(\mathbb{R})$ defined by 
\begin{equation}\label{Schrod}
  \tilde{A} u = -\ddot{u}+ v\cdot u,
\end{equation}
where $v$ is an essentially bounded periodic potential. This is an important unbounded operator that arises widely in quantum mechanics. The knowledge of spectrum of this operator is valuable for understanding many physical phenomena in quantum mechanics. There are several classical results which reveals the connection between the nature of spectrum of $\tilde{A}$ and the potential function $v.$ The celebrated Borg's theorem states that there are no gaps in the essential spectrum of $\tilde{A}$ if and only if the periodic potential function $v$ reduces to a constant almost everywhere \cite{borg}.

\subsection{Discrete Schr$\ddot{\mbox{o}}$dinger operator} The simple finite difference approximation will reduce the  Schr$\ddot{\mbox{o}}$dinger operator defined in \eqref{Schrod}, to a bounded self-adjoint operator on  $\ell^2(\mathbb{Z})$, which we call as the discrete Schr$\ddot{\mbox{o}}$dinger operator. The discrete version of Borg's theorem is known due to Flaschka \cite{Fla} and a much simpler proof using basic linear algebraic techniques is done in \cite{kiran}. The key idea used in \cite{kiran} is the identification of the discrete Schr$\ddot{\mbox{o}}$dinger operator as a block Laurent operator (after some scaling and translation). This identification helps us to use the rich theory of spectral analysis of block Laurent operators to the problem. Here we explain this identification briefly.

Consider the Schr$\ddot{\mbox{o}}$dinger operator with periodic potential $v$ and without loss of generality assume that the periodicity of $v$ is $1$. Now approximate the equation $(\ref{Schrod})$ in the interval $[-n,n]$ with $n\in \mathbb{N}\cup \{\infty\}$ by using $p$ equispaced points in each interval $[j,j+1]\subset [-n,n]$. By using the standard difference we have
\[
-\ddot{u}(x_{i, (j)})= \frac{-u(x_{i+1,(j)})+2 u(x_{i,(j)}) - u(x_{i-1,(j)})}{h^2}
\]
with $h= 1/p$. For $j=-n,\cdots,n-1$ we have $\displaystyle{x_{s,(j)}=j+sh}$ where $s=0, \cdots, p-1$. Letting $n=\infty$, this can be treated as an operator denoted by $\tilde{A}$ acting on the sequence space $\ell^2(\mathbb{Z})$ and is defined by
\begin{equation}\nonumber
 \tilde{A}(\{u_n\}_{n\in \mathbb{Z}})=
 \frac{-(u_{n-1}+u_{n+1})+2 u_n}{h^2}+v_nu_n; \{u_n\}_{n\in \mathbb{Z}}\in \ell^2(\mathbb{Z}),
\end{equation}
where the sequence $\{v_n\}_{n\in \mathbb{Z}}$ is obtained as the values of the periodic function $v$ at $p$ equispaced points in an interval of length $1$. The periodicity of $v$ will imply that the sequence $\{v_n\}_{n\in \mathbb{Z}}$ is also periodic with period $p$. The matrix representation of $\tilde{A}$ with respect to the standard basis of $\ell^2(\mathbb{Z})$ is obtained as a tridiagonal matrix (up to the scaling factor $h^2$) as 
\begin{equation}\nonumber
\tilde{A}= \left[ \begin{array}{cccccccccc}
 \ddots & \ddots  \\
  \ddots & \ddots & \ddots & \\
    & -1 & 2+h^2v(x_{0, (1)}) & -1 & \\
     &   & \ddots &\ddots& \ddots \\
     &   &    &-1 & 2+h^2v(x_{p-1, (1)})  & -1\\
      &  &     &      & -1 & 2+h^2v(x_{0, (2)}) & -1 \\
      &  &     &      &    & \ddots & \ddots & \ddots \\
     &   &    &     &    &  & \ddots 	& \ddots
     \end{array} \right].
\end{equation}
Since $v$ is periodic, we have $v(x_{s,(j)})= v(x_{s,(j+1)})$ for all $s= 0, \cdots, p-1$. Thus the sequence appearing in the main diagonal becomes periodic with period $p$. When $n$ is finite, the resulting  matrix of size $np$ is the truncation of the bi-infinite matrix reported above. When $n=\infty,$ up to some scaling and translation by a scalar multiple of the identity operator, this operator can be identified as the block Laurent operator defined by 
\begin{equation}\label{anm2}
 A:= \begin{bmatrix}\cdot& \cdot& \cdot& \cdot& \cdots& \cdot&\cdot\\
 \cdot& A_{1}&A_0& A_{-1}&\cdots& \cdot&\cdot\\
 \cdot&\cdot&A_{1}& A_{0}& A_{-1}&\cdot&\cdot\\
 \cdot&\cdot&\cdot& A_{1}& A_{0}& A_{-1}& \cdot\\
 \cdot&\cdot&\cdots&\cdots& \cdot&\cdot&\cdot \end{bmatrix}
\end{equation}
where 
\[
A_1= \begin{bmatrix}0&0&\cdots&1\\0&0&\cdots&0\\\cdot&\cdot&\cdots& \cdot\\
       0&0&\cdots&0
      \end{bmatrix}\;\;\;,\;\;\;
A_{-1}= \begin{bmatrix}0&0&\cdots&0\\0&0&\cdots&0\\\cdot&\cdot&\cdots& \cdot\\
       1&0&\cdots&0
      \end{bmatrix} \;\;\;\mbox{and}\;\;\;
A_0= \begin{bmatrix}v_1&1&\cdots&0\\1&v_2&\cdots&0\\\cdot&\cdot&\cdots& \cdot\\
       0&0&\cdots&v_p
      \end{bmatrix}. 
\]    
Thus after some suitable translation and scaling the discrete Schr$\ddot{\mbox{o}}$dinger operator $\tilde{A}$ becomes the operator $A$ defined in (\ref{anm2}). It is worthwhile to notice that \eqref{anm2} gives the matrix representation of the bounded self-adjoint operator $A:\ell^2(\mathbb{Z})\rightarrow \ell^2(\mathbb{Z})$ defined by 
\begin{equation}\label{discsr}
A(x_n)=(x_{n-1}+x_{n+1}+v_nx_n)
\end{equation}
with respect to the standard orthonormal basis. We recall the notion of Toeplitz operators below.

\subsection{Toeplitz operators and sequences}\label{ssec:toeplitz}
Given a $p\times p$ matrix-valued integrable function $f$ defined on 
$(-\pi , \pi)$, the $p\times p$ matrices $f_j$, $j\in \mathbb Z$, represent the Fourier coefficients of $f$ defined as 
\begin{equation*}
f_j(\theta) = \frac{1}{2\pi}\int_{-\pi}^{\pi}f(x)e^{- \hat{i}j\, \theta }dx, \ \ \ j = 0,\pm 1, \pm 2, \ldots.
\end{equation*}
Then for $n$ being a nonnegative integer number or $\infty$ we define $T_n(f) $ the Toeplitz matrix or operator of size  $n$
generated by $f$ via the relations 
\[
(T_n(f))_{i,j}= {f_{i-j}}, \ \ \ \ i,j=1,\ldots,n.
\]
Here the integration of the matrix valued function turns out to be the entry wise integration.  
 When $n=\infty$ the Toeplitz operator $T_n(f)$ is simply written as $T(f)$ while the symbol $L(f)$  denotes the doubly infinite
Toeplitz matrix with $(L(f))_{i,j}= {f_{i-j}}$, $i,j\in \mathbb Z$. Furthermore by $\{ T_n(f) \}$ we indicate the Toeplitz matrix-sequence generated by $f$, with $T_n(f)$ of finite order.

Let $f$ be a  continuous and Hermitian $p\times p$ matrix-valued function on the unit circle, and let $\lambda_1(f(.))\ge \cdots \ge \lambda_p(f(.))$ denote its eigenvalues. Then
it is well known that the essential spectrum of $L(f)$ and the essential spectrum of $T(f)$ coincide with the
union of the ranges of the eigenvalues $\lambda_1(f(\cdot))\ge \cdots \lambda_p(f(\cdot))$, that is
\begin{equation}\label{bottcher-id}
\sigma_{\hbox{ess}}(L(f))= \sigma_{\hbox{ess}}(T(f)) = \bigcup_{j=1}^p \left[\inf_\theta(\lambda_j(f(\theta))),\sup_\theta(\lambda_j(f(\theta)))\right].
\end{equation}
For the latter result, see proposition $2.29(a)$ of the book \cite{bottcher}.

Now we consider the matrix valued symbol $f$ of the Laurent operator $A$, obtained from the matrix representation \eqref{anm2}. 
Then $f$ is given by 
$f(\theta)=A_0+A_{-1}e^{-i\theta}+A_{1}e^{i\theta}, \,\,-\pi<\theta\leq \pi.$
 Therefore we have for $-\pi<\theta\leq \pi,$
\[
 f(\theta)= \begin{bmatrix}v_1&1&0&\cdots&e^{i\theta}\\
 1&v_2&1&\cdots&0\\\cdot&\cdot&\cdot&\cdots&\cdot&\\
 e^{-i\theta}&\cdot&\cdot&\cdots&v_{p}\end{bmatrix}.
\]
From \eqref{bottcher-id}, we have
\[
\sigma(A)=\sigma(f)= \bigcup_{-\pi<\theta\leq \pi}\sigma(f(\theta)).
\]
 This identity will play an important role in the proof of our main results. 
 It is worthwhile to notice that the symbol $f$ can be chosen to be any member of the set $\{f_k;k=0,1,2\ldots p-1\}$, 
 where $f_k$ is defined by
\[
  f_k(\theta)= \begin{bmatrix}v_{k+1}&1&0&\cdots&e^{i\theta}\\ 1&v_{k+2}&1&\cdots&0\\\cdot&\cdot&\cdot&\cdots&\cdot&\\
  e^{-i\theta}&\cdot&\cdot&\cdots&v_{k+p}\end{bmatrix},\; k= 0, 1, \cdots, p-1.
\]
Because of the periodicity of the sequence $\{v_n\}_{n\in \mathbb{Z}}$, we have the freedom to choose any one of the $f_k^{'}s$.
\section{Discrete Borg-type theorems}\label{mainres}
Recall that the discrete Borg's theorem states that the spectrum of the discrete Schr$\ddot{\mbox{o}}$dinger operator 
with real-valued periodic potential is connected if and only if the periodic potential reduces to a constant. Here we consider the 
case when the spectrum need not be connected. That is the spectrum is the union of intervals with some possible gaps (spectral gaps).
In this section we prove the new versions of discrete Borg's theorem. We prove that
the possible spectral gaps of the discrete Schr$\ddot{\mbox{o}}$dinger operator with real-valued 
periodic potential are of small width whenever the potential is almost a constant.
\begin{theorem}\label{pseudo borg1}
Let $A$ be the discrete Schr$\ddot{\mbox{o}}$dinger operator with real periodic potential sequence $\{v_n\}_{n\in \mathbb{Z}}$ 
defined in  \text{\textnormal{(\ref{discsr})}} and assume that the maximum width of the spectral gap of $A$ 
is less than $\epsilon$ for some $\epsilon>0$. Then there exists a constant $c$ such that 
$\displaystyle{\sup_{n\in \mathbb{Z}}|v_n-c|\leq \epsilon(p-1)}$, where $p$ is the period of the potential sequence $v_n$.
\end{theorem}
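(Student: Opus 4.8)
The plan is to pass from the pseudospectrum of $A$ to the ordinary spectrum $\sigma(A)$, and then to read off the geometry of $\sigma(A)$ from the $p\times p$ Hermitian symbols $f(\theta)$. Using the identities recorded in \eqref{specpseud} together with $\Lambda_\epsilon(f(\theta))=\sigma(f(\theta))+\Delta_\epsilon$, I would first observe that
\[
\Lambda_\epsilon(A)=\bigcup_{-\pi<\theta\le\pi}\bigl(\sigma(f(\theta))+\Delta_\epsilon\bigr)=\sigma(A)+\Delta_\epsilon=\{z\in\C:\operatorname{dist}(z,\sigma(A))\le\epsilon\},
\]
so that, $f(\theta)$ being Hermitian, $\Lambda_\epsilon(A)$ is exactly the closed $\epsilon$-neighbourhood in $\C$ of the real compact set $\sigma(A)$. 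Thus the hypothesis becomes a purely one-dimensional statement about $\sigma(A)\subset\mathbb R$.

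Next I would describe $\sigma(A)$ as a union of bands. Since $\theta\mapsto f(\theta)$ is a continuous family of Hermitian matrices, its ordered eigenvalues $\lambda_1(\theta)\le\cdots\le\lambda_p(\theta)$ are continuous, and each $\lambda_j$ sweeps out a closed interval $B_j$ as $\theta$ varies over $(-\pi,\pi]$. Hence $\sigma(A)=\bigcup_{j=1}^p B_j$ is a union of at most $p$ closed intervals and so has at most $p-1$ spectral gaps. Because $\Lambda_\epsilon(A)$ is the $\epsilon$-fattening of $\sigma(A)$, and the closed $\epsilon$-neighbourhoods of two real intervals lying at distance $d$ apart meet precisely when $d\le 2\epsilon$, connectedness of $\Lambda_\epsilon(A)$ forces every gap of $\sigma(A)$ to have length at most $2\epsilon$. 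Summing over the at most $p-1$ gaps, the total length of all gaps of $\sigma(A)$ is at most $2\epsilon(p-1)$.

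The heart of the argument---and the step I expect to be the main obstacle---is the quantitative comparison
\[
\max_{n}v_n-\min_{n}v_n\ \le\ (\text{total length of the gaps of }\sigma(A)).
\]
This is the finitary, stability form of the discrete Borg theorem of \cite{kiran}: when there are no gaps the potential must be constant, and here I want the potential to spread out by no more than the room the gaps provide. To attack it I would exploit three features of the symbols. First, each $v_n$ is a diagonal entry of $f(\theta)$ for every $\theta$, so by Schur--Horn majorization the potential values are majorized by the eigenvalues $\lambda_j(\theta)$ simultaneously in $\theta$; evaluating at the periodic point $\theta=0$ and the antiperiodic point $\theta=\pi$, whose spectra are exactly the band edges, traps the $v_n$ between the outer band edges. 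Second, the trace $\operatorname{tr}f(\theta)=\sum_n v_n$ is independent of $\theta$, and (for $p\ge 3$) so is $\operatorname{tr}f(\theta)^2=\sum_n v_n^2+2p$; these fix the first two moments of the band-edge distribution in terms of the potential, and it is this rigidity that must convert ``small gaps'' into ``small spread of $\{v_n\}$''. Equivalently, one may package $\sigma(A)$ as $P^{-1}([-2,2])$ for the monic discriminant polynomial $P$ of degree $p$ defined by $\det(\lambda I-f(\theta))=P(\lambda)-2\cos\theta$, and show that a small total gap forces $P$ to be close to the Chebyshev polynomial $2\,T_p(\cdot/2)$, whose preimage is a single gap-free band and which corresponds precisely to constant potential. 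Since the coefficients of $P$ encode $\sum_n v_n$ and the remaining symmetric functions of the potential, closeness to Chebyshev yields the bound on the oscillation of $\{v_n\}$. The delicate point is to obtain the clean linear factor $2(p-1)$ rather than a cruder estimate that contaminates the bound with the (uncontrolled) inner band widths.

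Granting the displayed comparison, the conclusion is immediate: the total gap length is at most $2\epsilon(p-1)$ by the previous paragraph, so $\max_n v_n-\min_n v_n\le 2\epsilon(p-1)$. Taking $c$ to be the midpoint $\tfrac12(\max_n v_n+\min_n v_n)$ (or simply $c=\min_n v_n$) then gives $\sup_{n\in\mathbb Z}|v_n-c|\le 2\epsilon(p-1)$, where periodicity reduces the supremum to the finitely many values $v_1,\dots,v_p$. This would complete the proof.
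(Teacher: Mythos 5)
Your reduction of the hypothesis to a one-dimensional statement about $\sigma(A)\subset\mathbb{R}$ is correct and matches the paper's setup: since each $f(\theta)$ is Hermitian, $\Lambda_\epsilon(A)=\sigma(A)+\Delta_\epsilon$, the spectrum is a union of at most $p$ closed bands, and connectedness of $\Lambda_\epsilon(A)$ forces every spectral gap to have length at most $2\epsilon$. But the step you yourself flag as ``the heart of the argument'' --- the comparison $\max_n v_n-\min_n v_n\le(\text{total gap length})$ --- is never proved; you only list three candidate strategies (Schur--Horn majorization, the first two trace moments, closeness of the discriminant to the Chebyshev polynomial), and none of them is carried out. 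This is a genuine gap, not a routine verification: the trace of $f(\theta)$ sees only $\sum_n v_n$, which is invariant under rearrangement of the potential, the second moment $\sum_n v_n^2+2p$ by itself does not separate the $v_n$, and the discriminant route would require a quantitative stability version of Borg's theorem that is essentially the entire content of the result, including the clean factor $2\epsilon(p-1)$ you acknowledge is delicate. As written, the proposal establishes only the easy half (small gaps) and assumes the hard half.

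The paper closes exactly this hole with a shift-and-interlace device. By periodicity, all $p$ cyclic shifts $f_k(\theta)$, $k=0,1,\dots,p-1$, are symbols of the same operator $A$, so they all produce the same bands and the same gaps. Deleting the last row and column of $f_k(\theta)$ yields a $\theta$-independent $(p-1)\times(p-1)$ Jacobi matrix $J_k$ with diagonal $v_{k+1},\dots,v_{k+p-1}$, and Cauchy interlacing, applied for every $\theta$, pins the $j$-th eigenvalue $\mu_{j,k}$ of each $J_k$ inside the closed $j$-th gap $[\lambda_{j+1}^+,\lambda_j^-]$, where $\lambda_j^-=\min_\theta\lambda_j(f_k(\theta))$ and $\lambda_{j+1}^+=\max_\theta\lambda_{j+1}(f_k(\theta))$; this interval has length at most $2\epsilon$ by connectedness. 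Hence $|\mu_{j,k'}-\mu_{j,k''}|\le 2\epsilon$ for all $j$ and all shifts $k',k''$; summing over $j$ gives $|\mathrm{Trace}(J_0)-\mathrm{Trace}(J_{i-1})|\le 2\epsilon(p-1)$, and by periodicity this trace difference telescopes to the single term $|v_{i-1}-v_p|$. Taking $c=v_p$ finishes the proof. So the missing idea is precisely that the principal Jacobi submatrices of the shifted symbols have their eigenvalues trapped in the spectral gaps: this is what converts ``gaps of size at most $2\epsilon$'' into ``oscillation of the potential at most $2\epsilon(p-1)$'' by elementary linear algebra, with no need for majorization or Chebyshev approximation.
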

\begin{proof}
Consider the symbols $f_k$ of $A$ defined by 
\[
  f_k(\theta)= \begin{bmatrix}v_{k+1}&1&0&\cdots&e^{i\theta}\\ 1&v_{k+2}&1&\cdots&0\\\cdot&\cdot&\cdot&\cdots&\cdot&\\
  e^{-i\theta}&\cdot&\cdot&\cdots&v_{k+p}\end{bmatrix},\; k= 0, 1, \cdots, p-1.
\]
Let $\sigma(f_k(\theta))= \{\lambda_1(f_k(\theta))\geq \lambda_2(f_k(\theta))\geq \cdots \geq \lambda_p(f_k(\theta))\}$ for each $-\pi<\theta\leq \pi$.
Now we consider the sub matrices $J_k$ and $J_{k+1}$ of $f_k(\theta)$ defined by
 \[
  J_k:= \begin{bmatrix}v_{k+1}&1&0&\cdots&0\\1&v_{k+2}&1&\cdots&0\\
         \cdot&\cdot&\cdot&\cdots&\cdot\\0&0&0&\cdots&v_{k+p-1}
        \end{bmatrix},\,\,   J_{k+1}:= \begin{bmatrix}v_{k+2}&1&0&\cdots&0\\1&v_{k+3}&1&\cdots&0\\
         \cdot&\cdot&\cdot&\cdots&\cdot\\0&0&0&\cdots&v_{k+p}
        \end{bmatrix}.
 \]
Notice that $J_k$ and $J_{k+1}$ are Jacobi matrices. Let 
\[
 \sigma(J_k)= \{\mu_{1,k}\geq \mu_{2,k}\geq\cdots\geq \mu_{p-1, k}\},\;\;\; \sigma(J_{k+1})= \{\mu_{1,k+1}\geq \mu_{2,k+1}\geq\cdots\geq \mu_{p-1, k+1}\}. 
\]
The Cauchy interlacing properties for eigenvalues of Hermitian matrices gives
\[
 \lambda_1(f_k(\theta))\geq \mu_{1, k}\geq\lambda_2(f_k(\theta))\geq \cdots\geq \lambda_{p-1}(f_k(\theta))\geq \mu_{p-1, k}\geq \lambda_p(f_k(\theta)),
\]
\[
 \lambda_1(f_k(\theta))\geq \mu_{1, k+1}\geq\lambda_2(f_k(\theta))\geq \cdots\geq \lambda_{p-1}(f_k(\theta))\geq \mu_{p-1, k+1}\geq \lambda_p(f_k(\theta)).
\]
The above inequalities are true for all $-\pi<\theta\leq \pi$. Since the spectral gaps are of width less than $\epsilon,$ 
$ \bigcup_{-\pi<\theta\leq\pi} \sigma(f_k(\theta))+(-\frac{\epsilon}{2},\frac{\epsilon}{2}) $ is connected.
Therefore we have 
\begin{equation}\label{ineq1}
 |\lambda_{j,k}^--\lambda_{j+1, k}^+|\leq \epsilon \text{ for each }k= 0, 1, \cdots, p-1\; \text{ and }\; j= 1, 2,\cdots p,
\end{equation}

 where $ \lambda_{j,k}^+:= \max_{\theta}\lambda_j(f_k(\theta))\;\;\; \mbox{and}\;\;\;
 \lambda_{j,k}^-:= \min_{\theta}\lambda_j(f_k(\theta)).$
We claim that $|\mu_{j, k'}- \mu_{j, k''}|\leq \epsilon$ for all $j= 1, 2, \cdots, p$ and for any two different $k', k''$. 
If this is not the case, then we have some $k', k''$ and $j$ such that $|\mu_{j, k'}- \mu_{j, k''}|> \epsilon.$
From the interlacing property mentioned above and from the fact that we can choose any of the symbol $f_k,$ we get
the contradiction to \eqref{ineq1}. Thus the $j^{th}$ eigenvalue of $J_{k'}$ and $J_{k''}$ are atmost separated by $\epsilon$ distance.
Consider $k'= 0$ and $k''= 1$, we have
\[
 |\mbox{Trace}(J_0)- \mbox{Trace}(J_1)|\leq \epsilon(p-1).
\]
Therefore we get
\[
 \left|\sum_{j= 1}^{p-1}v_j- \sum_{j=2}^pv_j\right|\leq \epsilon(p-1).
\]
Thus $ |v_1- v_p|\leq \epsilon(p-1).$
In general by considering $J_0$ and $J_{i-1}$ we get
\[
 \left|\sum_{j= 1}^{p-1}v_j- \sum_{j=i}^{p-2+i}v_j\right|= |\mbox{Trace}(J_0)- \mbox{Trace}(J_{i-1})|\leq \epsilon(p-1),
\]
and therefore $ |v_{i-1}- v_p|\leq \epsilon(p-1)$ for all $i= 2, 3, \cdots, p$. Hence by choosing $c= v_p$ we get the desired result.
\end{proof}
The following theorem gives the converse of the above result.
\begin{theorem}\label{pseudo borg2}
Let $A$ be the discrete Schr$\ddot{\mbox{o}}$dinger operator with real periodic potential sequence $\{v_n\}_{n\in \mathbb{Z}}$
defined by  \text{\textnormal{(\ref{discsr})}}. 
If there exists a constant $c$ such that $\displaystyle{\sup_{n\in \mathbb{Z}}|v_n- c|\leq \epsilon }$ for some $\epsilon> 0$, 
then  the possible spectral gaps of $A$ are of maximum size less than or equal to $2\epsilon.$
\end{theorem}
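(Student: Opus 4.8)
The plan is to compare $A$ with the constant-potential operator $A_c$ obtained by replacing every $v_n$ by the scalar $c$, and to exploit that, for a self-adjoint operator, the pseudospectrum is exactly a planar neighbourhood of the spectrum. Indeed, combining the identities in \eqref{specpseud} with $\Lambda_\e(f(\theta))=\sigma(f(\theta))+\Delta_\e$ gives $\Lambda_\e(A)=\sigma(A)+\Delta_\e=\{z\in\C:\mbox{dist}(z,\sigma(A))\le\e\}$, so the whole question reduces to controlling how far $\sigma(A)$ can wander from a set known to be connected.

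Two facts about $A_c$ drive the argument. First, $A_c(x_n)=x_{n-1}+x_{n+1}+c\,x_n$ is $cI$ plus the off-diagonal part, which is the sum of the bilateral shift and its adjoint; hence $\sigma(A_c)=[c-2,c+2]$, a connected interval. Second, $A-A_c=\mbox{diag}(v_n-c)$ is self-adjoint with $\|A-A_c\|=\sup_n|v_n-c|\le\e$. I would then invoke the standard spectral-variation bound: if $B,B'$ are self-adjoint, every $\lambda\in\sigma(B)$ satisfies $\mbox{dist}(\lambda,\sigma(B'))\le\|B-B'\|$, for otherwise $\|(\lambda I-B')^{-1}\|<\|B-B'\|^{-1}$ and a Neumann series would make $\lambda I-B$ invertible. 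Applying this in both directions to $A$ and $A_c$ yields (i) $\mbox{dist}(\lambda,[c-2,c+2])\le\e$ for every $\lambda\in\sigma(A)$, and (ii) $[c-2,c+2]=\sigma(A_c)\subseteq\{z:\mbox{dist}(z,\sigma(A))\le\e\}=\Lambda_\e(A)\subseteq\Lambda_{2\e}(A)$.

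The connectivity then assembles directly. By (ii) the connected interval $[c-2,c+2]$ sits inside $\Lambda_{2\e}(A)$. Given any $z\in\Lambda_{2\e}(A)=\{z:\mbox{dist}(z,\sigma(A))\le2\e\}$, choose $\lambda\in\sigma(A)$ with $|z-\lambda|\le2\e$ and, using (i), a point $w\in[c-2,c+2]$ with $|\lambda-w|\le\e$. Every point of the segments $[z,\lambda]$ and $[\lambda,w]$ lies within distance $2\e$ of $\lambda\in\sigma(A)$, so both segments are contained in $\Lambda_{2\e}(A)$; thus $z$ is joined inside $\Lambda_{2\e}(A)$ to the connected set $[c-2,c+2]$. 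Since $z$ was arbitrary, $\Lambda_{2\e}(A)$ is path-connected, and therefore connected.

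I expect the only delicate point to be the spectral-variation estimate: it is the single place where self-adjointness is genuinely used and where the gain is exactly $\e$, while the identification of $\sigma(A_c)$ and the final geometric gluing are routine. A variant closer to the proof of Theorem~\ref{pseudo borg1} would instead write $\Lambda_{2\e}(A)=\bigcup_{j=1}^{p}S_j$ with $S_j$ the radius-$2\e$ stadium around the $j$-th band of $\sigma(A)$, use Weyl's inequality $|\lambda_j(f(\theta))-\lambda_j(f_c(\theta))|\le\e$ to bound each spectral gap of $A$ by $2\e$ (the gaps of $A_c$ being zero), and note that consecutive radius-$2\e$ stadiums then necessarily overlap; the neighbourhood argument above has the advantage of never having to track the individual bands.
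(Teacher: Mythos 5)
Your proof is correct, but it takes a genuinely different route from the paper's. The paper argues at the level of the matrix symbol: it replaces $f(\theta)$ by the constant-diagonal symbol $f_1(\theta)$ with potential $\equiv v_1$ (so its perturbation bound is $\|f(\theta)-f_1(\theta)\|\le 2\e$, rather than the $\e$ you get by comparing to $c$ itself), applies Weyl's eigenvalue perturbation inequality for Hermitian matrices to get $|\lambda_j(f(\theta))-\lambda_j(f_1(\theta))|\le 2\e$, and quotes the discrete Borg theorem of \cite{kiran} for the connectedness of the spectrum of the constant-potential block Laurent operator; the final step, that these two facts force $\bigcup_{\theta}\sigma(f(\theta))+\Delta_{2\e}$ to be connected, is asserted rather than spelled out. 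You instead work at the operator level: you identify $\sigma(A_c)=[c-2,c+2]$ directly from the bilateral shift (so you never need \cite{kiran}), you replace Weyl's inequality by the Neumann-series spectral-variation bound for self-adjoint operators, which gives the two-sided Hausdorff estimate between $\sigma(A)$ and $\sigma(A_c)$, and, importantly, you supply the explicit topological gluing (the segments from $z$ to $\lambda$ to $w\in[c-2,c+2]$) that the paper's last sentence glosses over. What the paper's route buys is uniformity with the proof of Theorem \ref{pseudo borg1} and an argument that transfers verbatim to the variable-coefficient case (Theorem \ref{pseudo borgjacobi1}), where the symbol-level comparison is the natural one; what your route buys is self-containedness (no external Borg-type input), a sharper intermediate constant, and an airtight connectedness argument. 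Both proofs ultimately rest on the same two pillars: a norm bound on the perturbation from a constant-potential reference operator, plus connectedness of that reference spectrum.
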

\begin{proof}
Since there exists a constant $c$ such that $\displaystyle{\sup_{n\in \mathbb{Z}}|v_n-c|\leq \epsilon }$, we have
$ |v_{i}- v_{j}|\leq 2\epsilon,$ for every $i,j \in  \mathbb{Z}.$ Thus the matrix valued symbol $f(\theta)$ becomes
\[
 f(\theta)= \begin{bmatrix}v_1&1&0&\cdots&e^{i\theta}\\
 1&v_2&1&\cdots&0\\\cdot&\cdot&\cdot&\cdots&\cdot&\\
 e^{-i\theta}&\cdot&\cdot&\cdots&v_{p}\end{bmatrix}, 
\]
with $|v_1-v_j|\leq 2\epsilon$ for all $j= 2, 3, \cdots, p$. Thus $v_1- v_j= r_j$ with $\lvert r_j\rvert\leq 2\epsilon$.
Then $f(\theta)$ can be written as
\[
 f(\theta)= \begin{bmatrix}v_1&1&0&\cdots&e^{i\theta}\\
 1&v_1- r_2&1&\cdots&0\\\cdot&\cdot&\cdot&\cdots&\cdot&\\
 e^{-i\theta}&\cdot&\cdot&\cdots&v_{1}-r_p\end{bmatrix}. 
\]
If we choose 
\[
 f_1(\theta)= \begin{bmatrix}v_1&1&0&\cdots&e^{i\theta}\\
 1&v_1&1&\cdots&0\\\cdot&\cdot&\cdot&\cdots&\cdot&\\
 e^{-i\theta}&\cdot&\cdot&\cdots&v_{1}\end{bmatrix},
\]
then 
\[
 f_1(\theta)- f(\theta)= \begin{bmatrix}0&0&0&\cdots&\\
 0&r_2&0&\cdots&0\\\cdot&\cdot&\cdot&\cdots&\cdot&\\
 0&\cdot&\cdot&\cdots&r_p\end{bmatrix}
\]
Thus we have $\|f_1(\theta)- f(\theta)\|\leq 2\epsilon$. By the discrete version of Borg's theorem proved in \cite{kiran}, 
the spectrum of the block Laurent operator $A_1$ corresponding to the matrix valued symbol $f_1(\cdot)$ is connected.

We will show that $\sigma(A) +(-\epsilon,\epsilon) =\bigcup_{-\pi<\theta\leq \pi}\sigma(f(\theta)) +(-\epsilon,\epsilon) $ is connected.

Let $A, A+E \in \mathbb{C}^{N\times N}$ be Hermitian matrices then $|\lambda_j(A)- \lambda_j(A+E)|\leq \|E\|$ where
$\lambda_j(A)$ denotes the $j^{th}$ eigenvalue of $A$ (see \cite{golub} for eg.).
 Therefore $|\lambda_j(f(\theta))- \lambda_j(f_1(\theta))|\leq 2\epsilon$ for all $j= 1, 2, \cdots, p$. 
 This together with the connectedness of  $\sigma(A_1)=\displaystyle{\bigcup_{-\pi<\theta\leq \pi}\sigma(f_1(\theta))}$ 
 implies that $\sigma(A) +(-\epsilon,\epsilon)= \bigcup_{-\pi<\theta\leq \pi} \sigma(f(\theta))+(-\epsilon,\epsilon)$ is also connected. 
 Hence the possible spectral gaps can have size at most $2\epsilon.$
\end{proof}
\begin{remark}
 It is easy to see that the discrete Borg's theorem follows from the above theorems. 
\end{remark}
\begin{remark}
The bound obtained in Theorem \ref{pseudo borg1} involves the parameter $p.$ This will create some computational obstacles when we deal with sequence of operators.
\end{remark}
\begin{example}
 Consider the $5\times 5$ matrix $f(\theta)$ defined by
 \[
  f(\theta)= \begin{bmatrix}1&1&0&0&e^{i\theta}\\1&1.1&1&0&0\\0&1&1.2&1&0\\0&0&1&1.3&1\\
  e^{-i\theta}
&0&0&1&1.4\end{bmatrix}.
 \]
Here $f(\theta)$ will be the symbol matrix corresponding to some discrete Schr$\ddot{\text{\textnormal{o}}}$dinger operator
$A$ (after some suitable scaling and translation). We have $v_1= 1, v_2= 1.1, v_3= 1.2, v_4= 1.3, v_5= 1.4$ 
and $|v_i- 1.2|\leq 0.2$ for all $i= 1,\cdots, 5$. Figure $1$ is $\sigma(A)$ which is disconnected. However, Figure $2$
shows that the set $\Lambda_{0.2}(A)=\sigma(A)+B(0,0.2)$ is connected, where $B(0,0.2)$ is the disc of radius $0.2$ about the origin.
Hence the spectral gap size is less than $0.4.$
\begin{figure}[ht]
\begin{center}
\includegraphics[width=15cm]{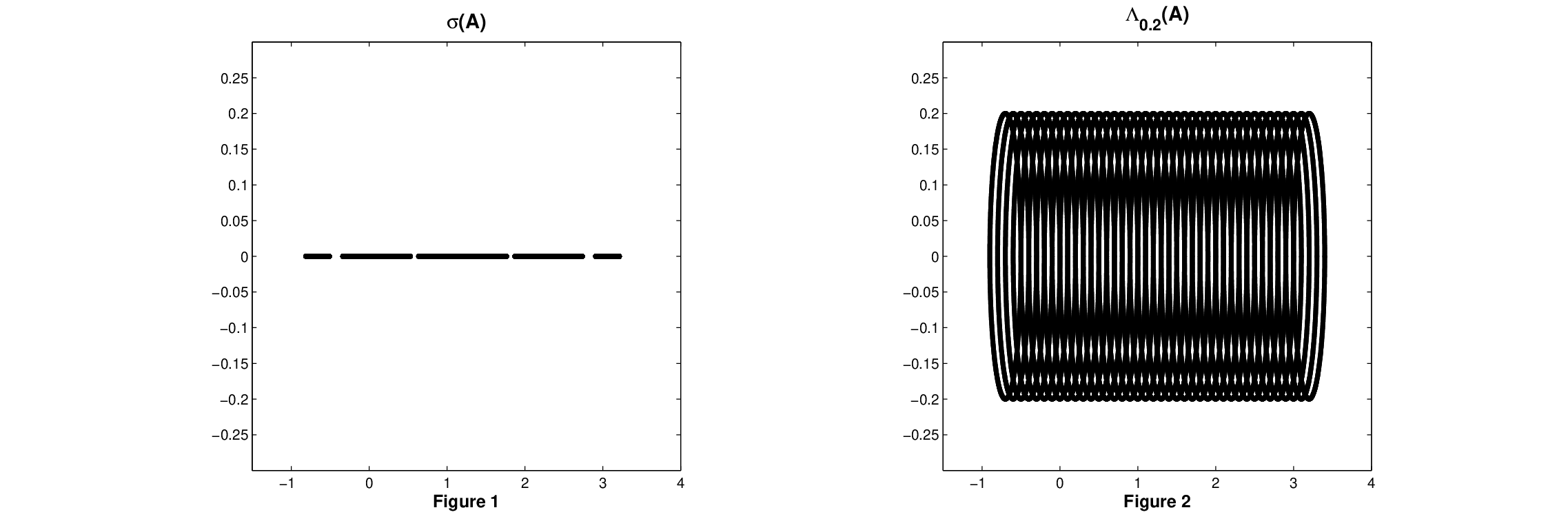}
\end{center}
\end{figure}
\end{example}
\begin{example}
 Consider the $10\times 10$ matrix $f(\theta)$ defined by
 \[
  f(\theta)= \begin{bmatrix}0&1&0&\cdots&\cdots&e^{i\theta}\\1&0.05&1&0&\cdots&0\\0&1&0.1&1&\cdots&0\\ \cdots&\cdots&\cdots&\cdots&\cdots&\cdots\\
e^{-i\theta}&0&0&0&\cdots&0.45\end{bmatrix}_{10\times 10}.
 \]
Here we have $v_1= 0, v_2= 0.05, \cdots, v_{10}= 0.45$ and $|v_i- 0.225|\leq 0.225$ for all $i= 1,\cdots, 10$.
Figure $3$ is $\sigma(A)$  and Figure $4$ is $\Lambda_{0.225}(A)=\sigma(A)+B(0,0.225)$. 
Similar observations can be made here.
\begin{figure}[ht]
\begin{center}
\includegraphics[width=15cm]{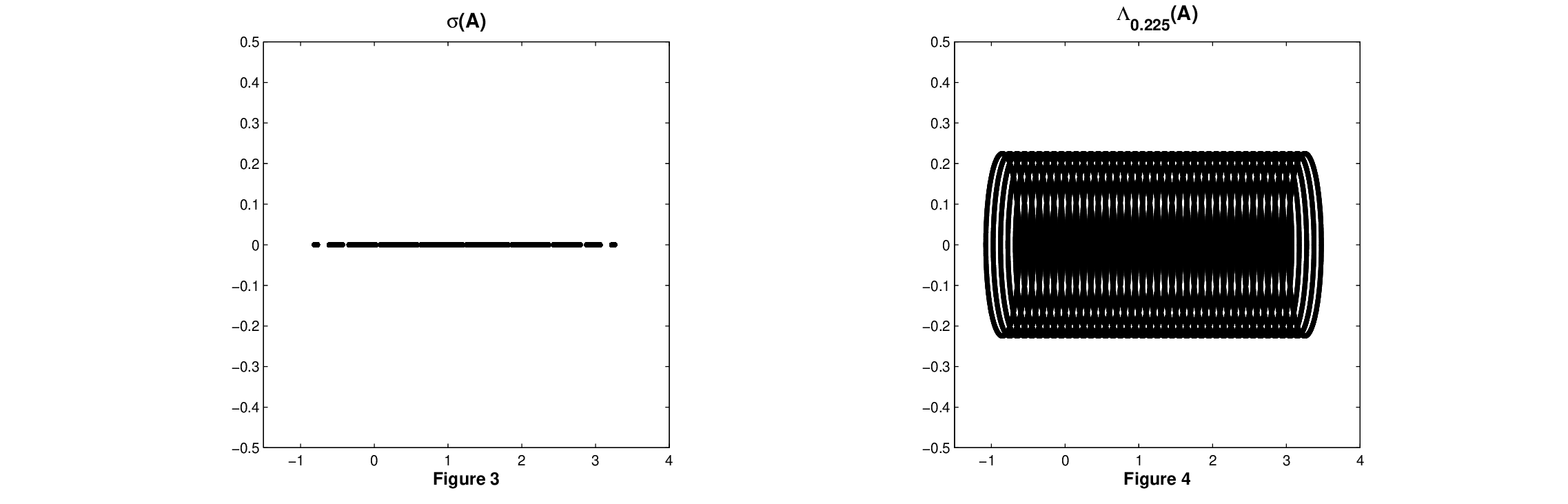}
\end{center}
\end{figure}
\end{example}
\section{Further Generalizations}\label{furthergen}
In this section we consider the one dimensional discrete Schr$\ddot{\mbox{o}}$dinger operator with periodic potential and 
variable coefficients. We generalize the results obtained in the last section to such operators and to 
a much general class of block Laurent operators.
\subsection{Schr$\ddot{\mbox{o}}$dinger operator with variable coefficient}
Consider the one dimensional Schr$\ddot{\mbox{o}}$dinger operator with real-valued periodic potential and variable coefficient 
defined by \[\tilde{A}(u)= -{\frac{d}{dx}}\left(a\cdot{\frac{d}{dx}}{u}\right)+ v\cdot u,\] with $a$ being positive and periodic 
with the same period as $v$. As similar to one dimensional Schr$\ddot{\mbox{o}}$dinger operator with constant coefficient,
the finite difference approximation will lead $\tilde{A}$ to a general Jacobi operator (after some suitable scaling and translation). This can be represented as a bi-infinite symmetric matrix in the following form.
\begin{equation}\label{anm3}
A_{n,\alpha}(p)= \left[ \begin{array}{cccccccccc}
 \ddots & \ddots  \\
  \ddots & \ddots & \ddots & \\
    & -\alpha_{p-1} & \gamma_0 & -\alpha_{0}& \\
     &   & \ddots &\ddots& \ddots \\
     &   &    &-\alpha_{p-2} & \gamma_{p-1}  & -\alpha_{p-1}\\
      &  &     &      & -\alpha_{p-1}& \gamma_0 & -\alpha_{0} \\
      &  &     &      &    & \ddots & \ddots & \ddots \\
     &   &    &     &    &  & \ddots 	& \ddots
     \end{array} \right],
\end{equation}
with $\gamma_s=\alpha_s+\alpha_{(s+1)\ \hbox{mod}\ p}+ h^2v(x_{s;(j)})$ and $\alpha_s=a(x_{s+{1/2}},j),x_{s+{1/2},j}=j+h(p)(s+{1/2})$.
Observe that the resulting structure, up to the sign, represents the case of general $p-$periodic Jacobi matrices.
As we did earlier, this can be identified as the block Laurent operator defined by
\[
 A:= \begin{bmatrix}\cdot& \cdot& \cdot& \cdot& \cdots& \cdot&\cdot\\
 \cdot& A_{1}&A_0& A_{-1}&\cdots& \cdot&\cdot\\
 \cdot&\cdot&A_{1}& A_{0}& A_{-1}&\cdot&\cdot\\
 \cdot&\cdot&\cdot& A_{1}& A_{0}& A_{-1}& \cdot\\
 \cdot&\cdot&\cdots&\cdots& \cdot&\cdot&\cdot \end{bmatrix}
\]
where each $A_0, A_1, A_{-1}$  are $p\times p$ matrices defined respectively by 
\[
 A_0= \begin{bmatrix} v_1 & a_1&0 &\cdots &0\\ a_1&v_2& a_2&\cdots &0\\\cdot&\cdot&\cdot&\cdots&\cdots\\\cdot&\cdot&\cdot&\cdots&a_{p-1}\\ 0&0&0& a_{p-1}&v_p\end{bmatrix},
 \;\; A_1= \begin{bmatrix}0&0&\cdots&a_p\\0&0&\cdots&0\\\cdot&\cdot&\cdots& \cdot\\
       0&0&\cdots&0
      \end{bmatrix},\;\; 
A_{-1}= \begin{bmatrix}0&0&\cdots&0\\0&0&\cdots&0\\\cdot&\cdot&\cdots& \cdot\\
       a_p&0&\cdots&0
      \end{bmatrix}.
      \]
Notice that this infinite matrix gives the matrix representation of the bounded self-adjoint operator $A:\ell^2(\mathbb{Z})\rightarrow \ell^2(\mathbb{Z})$ defined by 
\begin{equation}\label{jacobl2}
A(x_n)=(a_nx_{n-1}+a_nx_{n+1}+v_nx_n)
\end{equation}
with respect to the standard orthonormal basis.
The matrix valued symbol of $A$ will have the following form for $-\pi<\theta\leq \pi$;
\[
 f(\theta)=A_0+A_1e^{i\theta}+A_{-1}e^{-i\theta} =\begin{bmatrix}v_1&a_1&0&\cdots&a_{p}e^{i\theta}\\
 a_{1}&v_2&a_{2}&\cdots&0\\\cdot&\cdot&\cdot&\cdots&\cdot&\\
 a_{p}e^{-i\theta}&\cdot&\cdot&\cdots&v_{p}\end{bmatrix}.
\]
Assume that $a_{j+p}=a_j>0$ for each $j$. We also have the important identity
\[
\sigma(A)=\bigcup_{-\pi<\theta\leq \pi}\sigma(f(\theta)).
\]
Theorem \ref{pseudo borg1} and \ref{pseudo borg2} will have their natural generalizations to the Jacobi case. The proof techniques are almost the same. However we present it below for the sake of completion .
\begin{theorem}\label{pseudo borgjacobi}
Let $A$ be the discrete Schr$\ddot{\mbox{o}}$dinger operator with variable coefficients defined by \eqref{jacobl2}.
If the possible spectral gaps of $A$ have size less than $\epsilon$ for some $\epsilon > 0,$ then there exists a constant
$c$ such that $\displaystyle{\sup_{k\in \mathbb{N}}|v_k-c|\leq \epsilon(p-1)}$.
\end{theorem}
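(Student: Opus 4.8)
The plan is to follow the proof of Theorem~\ref{pseudo borg1} almost verbatim, the only new feature being that the off-diagonal entries are now the positive coefficients $a_j$ rather than all $1$'s; the main thing to check is that every step of the earlier argument is insensitive to this change. First I would invoke the periodicity of both $\{v_n\}$ and $\{a_n\}$ to write the matrix-valued symbol of $A$ in any of its $p$ cyclically shifted forms
\[
 f_k(\theta)= \begin{bmatrix}v_{k+1}&a_{k+1}&0&\cdots&a_{k+p}e^{i\theta}\\ a_{k+1}&v_{k+2}&a_{k+2}&\cdots&0\\\cdot&\cdot&\cdot&\cdots&\cdot\\
 a_{k+p}e^{-i\theta}&\cdot&\cdot&\cdots&v_{k+p}\end{bmatrix},\quad k=0,1,\dots,p-1,
\]
and from each $f_k(\theta)$ I would extract the two $(p-1)\times(p-1)$ principal submatrices $J_k$ and $J_{k+1}$ obtained by deleting the last, respectively the first, row and column. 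As before, both the $e^{i\theta}$ and the $e^{-i\theta}$ entries sit in a deleted row/column, so $J_k$ and $J_{k+1}$ are genuine $\theta$-independent Jacobi matrices; $J_k$ has diagonal $v_{k+1},\dots,v_{k+p-1}$ and $J_{k+1}$ has diagonal $v_{k+2},\dots,v_{k+p}$.

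Next I would run the spectral half of the argument exactly as in Theorem~\ref{pseudo borg1}. Since $a_j>0$ makes each $f_k(\theta)$ Hermitian, Cauchy interlacing applies to the principal submatrices $J_k,J_{k+1}$ and produces the same two interlacing chains between the ordered eigenvalues $\lambda_j(f_k(\theta))$ and the ordered eigenvalues $\mu_{j,k},\mu_{j,k+1}$ of the Jacobi blocks. Using the identity $\Lambda_\e(A)=\bigcup_\theta\bigl(\sigma(f_k(\theta))+\Delta_\e\bigr)$ from \eqref{specpseud}, the hypothesis that $\Lambda_\e(A)$ is connected forces consecutive spectral bands to overlap after $\e$-thickening, i.e.\ $\abs{\lambda_{j,k}^- - \lambda_{j+1,k}^+}\le 2\e$, where $\lambda_{j,k}^\pm$ are the extreme values of $\lambda_j(f_k(\theta))$ over $\theta$. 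Because we are free to use any $f_k$, the contradiction argument of Theorem~\ref{pseudo borg1} then yields $\abs{\mu_{j,k'}-\mu_{j,k''}}\le 2\e$ for all $j$ and all $k',k''$.

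Finally I would close with the trace/telescoping computation, and this is precisely the point at which the variable coefficients cause no trouble: the trace of each Jacobi block $J_k$ equals $\sum_{j=k+1}^{k+p-1}v_j$ and involves none of the $a_j$. Hence $\abs{\operatorname{Trace}(J_0)-\operatorname{Trace}(J_{i-1})}\le 2\e(p-1)$ collapses, after using periodicity of $\{v_n\}$, to $\abs{v_{i-1}-v_p}\le 2\e(p-1)$ for $i=2,\dots,p$, so taking $c=v_p$ gives the claim. The only genuine thing to verify---and the step I would expect to scrutinize most carefully---is that the interlacing-plus-connectedness mechanism controls the $\mu_{j,k}$ uniformly in the presence of arbitrary positive off-diagonals; but since both Cauchy interlacing and the trace identity depend only on Hermiticity and on the diagonal entries, the constant-coefficient proof transfers with no real loss.
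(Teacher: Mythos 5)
Your proposal is correct and follows essentially the same route as the paper's own proof: the paper likewise treats this theorem as a direct imitation of Theorem~\ref{pseudo borg1}, using the cyclically shifted Hermitian symbols $f_k(\theta)$, the $\theta$-independent Jacobi principal submatrices $J_k$, Cauchy interlacing combined with connectedness of $\Lambda_\e(A)$, and the observation that the trace of each $J_k$ involves only the $v_j$'s, so the telescoping estimate $|v_{i-1}-v_p|\leq 2\e(p-1)$ with $c=v_p$ goes through unchanged. Your explicit remark that the $a_j$'s never enter the trace identity is exactly the point that makes the constant-coefficient argument transfer, and it is the same observation (left implicit) in the paper.
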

\begin{proof}
The proof is merely an imitation of the proof of Theorem \ref{pseudo borg1}. Here the matrix-valued symbols are
\[
 f_k(\theta)= \begin{bmatrix}v_{k+1}&a_{k+1}&0&\cdots&a_{k+p}e^{i\theta}\\
 a_{k+1}&v_{k+2}&a_{k+2}&\cdots&0\\\cdot&\cdot&\cdot&\cdots&\cdot&\\
 a_{k+p}e^{-i\theta}&\cdot&\cdot&\cdots&v_{k+p}\end{bmatrix}.
\]
It suffices to consider the sub matrices
 \[
  J_k:= \begin{bmatrix}v_{k+1}&a_{k+1}&0&\cdots&0\\a_{k+1}&v_{k+2}&a_{k+1}&\cdots&0\\
         \cdot&\cdot&\cdot&\cdots&\cdot\\0&0&0&\cdots&v_{k+p-1}
        \end{bmatrix},\; k= 0, 1, \cdots, p-1,
 \]
 and apply Cauchy interlacing theorem for the eigenvalues of $J_k.$ Considering $J_0$ and $J_{i-1}$ for $i=2,3\ldots p,$ we get
 \[
  |v_{i-1}- v_p|= \left|\sum_{j= 1}^{p-1}v_j- \sum_{j=i}^{p-2+i}v_j\right|= |\mbox{Trace}(J_0)- \mbox{Trace}(J_i)|\leq \epsilon(p-1).
 \]
Hence the proof is completed.
\end{proof}

\begin{theorem}\label{pseudo borgjacobi1}
Let $A$ be the discrete Schr$\ddot{\mbox{o}}$dinger operator with variable coefficients defined by \eqref{jacobl2}. 
If there exist $c_1, c_2\in \mathbb{R}$ such that $\displaystyle{\sup_{n\in \mathbb{Z}}|v_n-c_1|\leq \epsilon}$ and 
$\displaystyle{\sup_{n\in \mathbb{Z}}|a_n-c_2|\leq \epsilon}$ for some $\epsilon> 0$. Then the maximum size of the spectral gaps of $A$
is less than or equal to  $3\epsilon.$
\end{theorem}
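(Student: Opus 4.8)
The plan is to mirror the proof of Theorem \ref{pseudo borg2}, replacing the purely diagonal perturbation used there by one that also acts on the off-diagonal (coefficient) entries. First I would introduce the constant comparison symbol
\[
 f_1(\theta)= \begin{bmatrix}c_1&c_2&0&\cdots&c_2e^{i\theta}\\ c_2&c_1&c_2&\cdots&0\\ \cdot&\cdot&\cdot&\cdots&\cdot\\ c_2e^{-i\theta}&\cdot&\cdot&\cdots&c_1\end{bmatrix},
\]
which is the matrix-valued symbol of the constant-coefficient operator $A_1(x_n)=c_2x_{n-1}+c_2x_{n+1}+c_1x_n$. Since $A_1$ has constant potential and constant coefficient, the discrete Borg theorem of \cite{kiran} (or a direct computation giving $\sigma(A_1)=[c_1-2c_2,c_1+2c_2]$) shows that $\sigma(A_1)=\bigcup_{-\pi<\theta\leq\pi}\sigma(f_1(\theta))$ is connected; equivalently, writing $\lambda_j^+(f_1)=\max_\theta\lambda_j(f_1(\theta))$ and $\lambda_j^-(f_1)=\min_\theta\lambda_j(f_1(\theta))$, the consecutive ordered bands leave no gap, i.e.\ $\lambda_{j+1}^+(f_1)\geq\lambda_j^-(f_1)$.

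Next I would estimate the perturbation $E(\theta):=f(\theta)-f_1(\theta)$. It is Hermitian and tridiagonal-plus-corner, so I split it as $E(\theta)=D+O(\theta)$, where $D$ is the diagonal part with entries $v_j-c_1$ and $O(\theta)$ carries the off-diagonal entries $a_j-c_2$ together with the corner $(a_p-c_2)e^{\pm i\theta}$. The hypotheses give $\|D\|\leq\sup_n|v_n-c_1|\leq\epsilon$, while a row-sum (Gershgorin) estimate gives $\|O(\theta)\|\leq 2\sup_n|a_n-c_2|\leq 2\epsilon$. Writing $\rho:=\sup_\theta\|E(\theta)\|$, Weyl's perturbation inequality (the bound from \cite{golub} already invoked in Theorem \ref{pseudo borg2}) then yields $|\lambda_j(f(\theta))-\lambda_j(f_1(\theta))|\leq\rho$ uniformly in $\theta$ and $j$.

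Finally I would run the band-overlap argument exactly as in Theorem \ref{pseudo borg2}. Taking $\min$ and $\max$ over $\theta$, the uniform eigenvalue bound gives $\lambda_j^-(f)\leq\lambda_j^-(f_1)+\rho$ and $\lambda_{j+1}^+(f)\geq\lambda_{j+1}^+(f_1)-\rho$, so the consecutive gap obeys $\lambda_j^-(f)-\lambda_{j+1}^+(f)\leq[\lambda_j^-(f_1)-\lambda_{j+1}^+(f_1)]+2\rho\leq 2\rho$. Since a gap of size up to $4\epsilon$ between two real bands is closed upon fattening each by $\Delta_{2\epsilon}$, and $\Lambda_{2\epsilon}(A)=\bigcup_{-\pi<\theta\leq\pi}\sigma(f(\theta))+\Delta_{2\epsilon}$, this forces $\Lambda_{2\epsilon}(A)$ to be connected once the fattening radius dominates $\rho$.

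The main obstacle, and the genuinely new point compared with the constant-coefficient Theorem \ref{pseudo borg2}, is the off-diagonal part $O(\theta)$: there the perturbation was diagonal, but here the coefficient deviations enter the operator norm through two off-diagonal bands, and one must verify that this enlarged perturbation radius $\rho$ is still absorbed by the $2\epsilon$-fattening precisely when the comparison bands of $f_1$ merely touch (as happens, for instance, when $p=2$). Pinning down the sharp relationship between the per-entry deviation $\epsilon$ and the fattening radius in the statement is exactly where the careful bookkeeping of the diagonal-plus-coefficient perturbation is required.
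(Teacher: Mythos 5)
Your plan is the same as the paper's own proof: compare the symbol $f(\theta)$ of $A$ with a constant comparison symbol, transfer the band structure by Weyl's inequality, and absorb the discrepancy into the fattening. The one place you differ is the bookkeeping of the perturbation, and there you are right and the paper is not: the paper takes the comparison symbol with all diagonal entries equal to $v_1$ and all off-diagonal entries equal to $a_1$ and simply asserts that the norm of the difference is at most $2\epsilon$, ignoring exactly the off-diagonal part $O(\theta)$ that you isolate. With the paper's centering the correct bound is $6\epsilon$ (diagonal deviations up to $2\epsilon$ plus two off-diagonal bands of deviations up to $2\epsilon$ each); with your better centering at $(c_1,c_2)$ it is $\rho\leq\epsilon+2\epsilon=3\epsilon$.

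The obstacle you flag in your last paragraph is therefore a genuine gap, and it cannot be closed, because the theorem is false as stated: consecutive bands of $f$ can be separated by up to $2\rho$, which exceeds the $4\epsilon$ of gap that the $\Delta_{2\epsilon}$-fattening can bridge, and this loss is real rather than an artifact of the estimates. Take $p=2$, $c_1=0$, $c_2>\epsilon>0$, and $v_1=\epsilon$, $v_2=-\epsilon$, $a_1=c_2+\epsilon$, $a_2=c_2-\epsilon$, so both hypotheses hold. The symbol is
\[
f(\theta)=\begin{bmatrix}\epsilon & a_1+a_2e^{i\theta}\\ a_1+a_2e^{-i\theta} & -\epsilon\end{bmatrix},
\]
with eigenvalues $\pm\sqrt{\epsilon^2+|a_1+a_2e^{i\theta}|^2}$ and $|a_1+a_2e^{i\theta}|^2\in[4\epsilon^2,4c_2^2]$; hence the two bands are $[\sqrt{5}\,\epsilon,\sqrt{\epsilon^2+4c_2^2}]$ and its reflection through $0$, separated by a gap of width $2\sqrt{5}\,\epsilon>4\epsilon$. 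Consequently $\Lambda_{2\epsilon}(A)=\bigcup_{\theta}\sigma(f(\theta))+\Delta_{2\epsilon}$ is disconnected even though the hypotheses hold. What your (correct) estimates do prove, by exactly the band-overlap argument you describe, is the corrected statement that $\Lambda_{3\epsilon}(A)$ is connected: gaps are at most $2\rho\leq6\epsilon$, and fattening by $3\epsilon$ closes them. So your proposal should be read as the paper's argument carried out honestly, and carried out honestly it shows that the radius $2\epsilon$ in the conclusion must be enlarged to at least $3\epsilon$; the paper's proof hides this behind the unjustified inequality $\|f(\theta)-f_1(\theta)\|\leq2\epsilon$.
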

\begin{proof}
Consider $f_0(\theta)$, the same $p\times p$ self-adjoint matrix defined in Theorem \ref{pseudo borgjacobi}.
We have $\sigma(A)+ (-\frac{3}{2}\epsilon,\frac{3}{2}\epsilon)=\bigcup_{-\pi<\theta\leq \pi} \sigma(f_0(\theta))+ (-\frac{3}{2}\epsilon,\frac{3}{2}\epsilon)$. 
Define
\[
 f(\theta)= \begin{bmatrix}v_{1}&a_{1}&0&\cdots&a_{1}e^{i\theta}\\
 a_{1}&v_{1}&a_{1}&\cdots&0\\\cdot&\cdot&\cdot&\cdots&\cdot&\\
 a_{1}e^{-i\theta}&\cdot&\cdot&\cdots&v_{1}\end{bmatrix}.
\]
Let $A_1$ be the discrete Schr$\ddot{\mbox{o}}$dinger operator corresponding to $f$. We have 
\[
 |\lambda_j(f(\theta))- \lambda_j(f_0(\theta))| \leq \|f(\theta)-f_0(\theta)\|\leq 3\epsilon.
\]
where $\lambda_j(f(\theta))$ denotes the $j^{th}$ eigenvalue of $f(\theta)$.
Since $\displaystyle{\sigma(A_1)= \bigcup_{-\pi<\theta\leq \pi} \sigma(f(\theta))}$ is connected (see \cite{kiran}),
the set $\sigma(A)+ (-\frac{3}{2}\epsilon,\frac{3}{2}\epsilon)$ becomes connected. Therefore we have the desired result. 
\end{proof}
\subsection{General block Laurent operators}
Now we consider a general class of block Laurent operators and prove discrete Borg-type theorem for the such class of operators.
The proof techniques are not much different, but we have to take care of the convergence issues of the matrix entries.
Theorem $4.1$ of \cite{KSN1} can be modified in the following way.
\begin{theorem}\label{pseudo Borggn}
Let $A$ be the bounded operator defined by the block Toeplitz-Laurent matrix
\[
 A = \left[ {\begin{array}{*{20}c}
   {\ddots } & {\ddots } & {} & {} & {} & {} & {} & {} & {} & {}  \\
   {\ddots} & {A_0 } & {A_{ - 1} } & {A_{ - 2}} & {} & {\ldots} & {A_{ - N}} & {\ldots} & {} & {}  \\
   {} & {A_1 } & {A_0 } & {A_{ - 1} } & {A_{ - 2}} & {} & {\ldots} & {A_{ - N}} & {\ldots} & {}  \\
   {} & {A_{ 2}} & A_1 & A_0 &A_{ - 1} & {A_{ - 2}} & {} & {\ldots} & {A_{ - N}} & {\ldots}  \\
   {} & {} & {A_{ 2}} & A_1 & A_0 & A_{ - 1} & {A_{ - 2}} & {} & {\ldots} & {A_{ - N}}  \\
   {\ldots} & {A_{ N}} & {\ldots} & {A_{ 2}} & A_1 & A_0 &A_{ - 1} & {A_{ - 2}} & {}   \\
   {} & {\ldots} & {A_{ N}} & {\ldots} & {A_{ 2}} & A_1 & A_0 & A_{ - 1} & {A_{ - 2}} & {}  \\
   {} & {} & {\ldots} & {A_{ N}} & {\ldots} & {A_{ 2}} & A_1 & A_0 & A_{ - 1} & {A_{ - 2}}  \\
   {} & {} & {} & {\ldots} & {A_{ N}} & {\ldots} & {A_{ 2}} & {A_1 } & {A_0 } & {\ddots }  \\
   {} & {} & {} & {} & {} & {} & {} & {} & {\ddots } & {\ddots}  \\
\end{array}} \right],
\]
where 
\[
 A_0  = \left[ {\begin{array}{*{20}c}
   {v_1 } & 1 & {} & {} & {} & {a_0}  \\
   1 & {v_2 } & 1 & {} & {} & {}  \\
   {} & 1 & . & . & {} & {}  \\
   {} & {} & . & . & . & {}  \\
   {} & {} & {} & . & . & 1  \\
   {a_0} & {} & {} & {} & 1 & {v_p }  \\
\end{array}} \right], \ \ \
 A_{ k}  = \left[ {\begin{array}{*{20}c}
   {} & {} & {} & {} & {} & {a_k}  \\
   {} & {} & {} & {} & {} & {}  \\
   {} & {} & {} & {} & {} & {}  \\
   {} & {} & {} & {} & {} & {}  \\
   {} & {} & {} & {} & {} & {}  \\
   {} & {} & {} & {} & {} & {}  \\
\end{array}} \right]={A_{ -k}}^T,
\]
such that $v_1\leq v_2 \ldots \leq v_p$ and $\sum_k \left|a_k\right|< \infty$.
 If the spectral gaps of $A$ are of size less than $\epsilon>0,$ then there exists a constant $c$ such that 
 $\displaystyle{\sup_{k}|v_k-c|\leq \epsilon(p-1)}$.
\end{theorem}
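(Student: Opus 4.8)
The plan is to reduce the statement to a single spread bound on the diagonal of one $p\times p$ Hermitian symbol, exactly as in Theorem~\ref{pseudo borg1}, but using the ordering hypothesis $v_1\le v_2\le\cdots\le v_p$ in place of the cyclic-shift freedom. First I would record the matrix-valued symbol. Since each off-diagonal block $A_k$ ($k\neq0$) carries its entry $a_k$ only in the corner, summing the series collapses all of the $a_k$ into the single off-diagonal corner of the symbol: writing $g(\theta)=\sum_{k\ge0}a_ke^{ik\theta}$ (convergent since $\sum_k|a_k|<\infty$), one obtains
\[
 f(\theta)= \begin{bmatrix}v_1&1&0&\cdots&g(\theta)\\ 1&v_2&1&\cdots&0\\ \cdot&\cdot&\cdot&\cdots&\cdot\\ \overline{g(\theta)}&\cdot&\cdot&\cdots&v_{p}\end{bmatrix},
\]
a $p\times p$ Hermitian matrix for each $-\pi<\theta\le\pi$, with $\Lambda_\e(A)=\bigcup_\theta\Lambda_\e(f(\theta))=\bigcup_\theta(\sigma(f(\theta))+\Delta_\e)$. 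Because the eigenvalues $\lambda_1(f(\theta))\ge\cdots\ge\lambda_p(f(\theta))$ depend continuously on $\theta$ over a connected parameter set, $\sigma(f)$ is a union of bands $[\lambda_j^-,\lambda_j^+]$ where $\lambda_j^+=\max_\theta\lambda_j(f(\theta))$ and $\lambda_j^-=\min_\theta\lambda_j(f(\theta))$.

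The central idea is to delete a single row and column so as to remove the only $\theta$-dependent entries at once. Deleting the last row and column of $f(\theta)$ gives the $\theta$-independent Jacobi matrix $J$ with diagonal $v_1,\dots,v_{p-1}$; deleting instead the first row and column gives the $\theta$-independent Jacobi matrix $J'$ with diagonal $v_2,\dots,v_p$. Let $\mu_1\ge\cdots\ge\mu_{p-1}$ and $\mu_1'\ge\cdots\ge\mu_{p-1}'$ be their eigenvalues. Applying the Cauchy interlacing inequalities to $J$ and to $J'$ inside $f(\theta)$ and then taking $\max$ and $\min$ over $\theta$ traps both families in the band gaps:
\[
 \lambda_{j+1}^+\ \le\ \mu_j,\ \mu_j'\ \le\ \lambda_j^-,\qquad j=1,\dots,p-1.
\]

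Now I would invoke connectedness. By the inequality just displayed the bands are disjoint and ordered, so the union $\bigcup_j([\lambda_j^-,\lambda_j^+]+\Delta_\e)$ is connected precisely when every consecutive gap closes up to $\e$ on each side, i.e. $\lambda_j^--\lambda_{j+1}^+\le 2\e$ for all $j$. Hence $\mu_j$ and $\mu_j'$ both lie in a common interval of length at most $2\e$, giving $|\mu_j-\mu_j'|\le2\e$, and summing over $j$ yields
\[
 |v_1-v_p|=\big|\,\mbox{Trace}(J)-\mbox{Trace}(J')\,\big|=\Big|\sum_{j=1}^{p-1}(\mu_j-\mu_j')\Big|\le 2\e(p-1).
\]
The ordering hypothesis then finishes the argument: for every $k$ we have $v_1\le v_k\le v_p$, so $|v_k-v_1|\le v_p-v_1\le2\e(p-1)$, and $c=v_1$ is the required constant.

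I expect the linear algebra (interlacing together with the trace telescoping) to be routine; the main obstacle, flagged already in the paragraph preceding the statement, is analytic rather than algebraic. One must justify that with infinitely many coupling blocks the corner symbol $g(\theta)$ is well defined, that $A$ is bounded, and above all that the identity $\Lambda_\e(A)=\bigcup_\theta\Lambda_\e(f(\theta))$ together with the Hermitian formula $\Lambda_\e(f(\theta))=\sigma(f(\theta))+\Delta_\e$ persists in this more general setting. The hypothesis $\sum_k|a_k|<\infty$ is exactly what supplies this: it makes $g$ a uniformly convergent Fourier series, so $f(\cdot)$ is continuous, the bands are honest compact intervals, and the band-gap reformulation of connectedness used above is legitimate.
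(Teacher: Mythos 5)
Your proposal is correct and follows essentially the same route as the paper's proof: the same Hermitian symbol with the summed corner entry, the same two $\theta$-independent principal submatrices (your $J$, $J'$ are the paper's $P_1$, $P_2$), the same Cauchy-interlacing-plus-connectedness argument bounding each eigenvalue gap by $2\e$, the same trace telescoping giving $|v_1-v_p|\leq 2\e(p-1)$, and the same use of the monotonicity hypothesis to produce $c$. Your closing remarks on uniform convergence of the corner series and the validity of $\Lambda_\e(A)=\bigcup_\theta\Lambda_\e(f(\theta))$ make explicit the convergence caveat the paper only mentions in passing.
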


\begin{proof}
The matrix-valued symbol associated with the block Toeplitz-Laurent operator $A$ is
 \[
 \tilde{f}\left( \theta  \right) = \left[ {\begin{array}{*{20}c}
   {v_1 } & 1 & {} & {} & {} & {f\left(\theta \right) }  \\
   1 & {v_2 } & 1 & {} & {} & {}  \\
   {} & 1 & . & . & {} & {}  \\
   {} & {} & . & . & . & {}  \\
   {} & {} & {} & . & . & 1  \\
   {\bar{f}\left(\theta \right) } & {} & {} & {} & 1 & {v_p }  \\
\end{array}} \right].
\]
where $f\left(\theta \right)= \sum_k a_ke^{ ik\theta }$. 
As in the previous theorem, we consider the sub matrices 
\[
\begin{array}{l}
 P_1  = \left[ {\begin{array}{*{20}c}
   {v_1 } & 1 & {} & {} & {} & {}  \\
   1 & {v_2 } & 1 & {} & {} & {}  \\
   {} & 1 & . & . & {} & {}  \\
   {} & {} & . & . & . & {}  \\
   {} & {} & {} & . & . & 1  \\
   {} & {} & {} & {} & 1 & {v_{p-1} }  \\
\end{array}} \right],  
 \,\,\,\,\,\,\,\,\,\,\, 
 P_2  = \left[ {\begin{array}{*{20}c}
   {v_2 } & 1 & {} & {} & {} & {}  \\
   1 & {v_3 } & 1 & {} & {} & {}  \\
   {} & 1 & . & . & {} & {}  \\
   {} & {} & . & . & . & {}  \\
   {} & {} & {} & . & . & 1  \\
   {} & {} & {} & {} & 1 & {v_p }  \\
\end{array}} \right]. \\ 
 \end{array}
\]
If any of their eigenvalues are different, say $\lambda_j(P_1)<\lambda_j(P_2)$, then by Cauchy Interlacing theorem,
$\lambda_{j}(\tilde{f}\left( \theta  \right))\leq\lambda_j(P_1)<\lambda_j(P_2)\leq \lambda_{j+1}(\tilde{f}\left( \theta  \right)), \,\,\textrm{ for all }\,\, \theta$.
Since the spectral gaps of $A$ are of size less than $\epsilon>0,$
the supremum of $\lambda_{j}(\tilde{f}\left( \theta  \right))+\frac{\epsilon}{2}$ is greater than or equal to the infimum of
$\lambda_{j+1}(\tilde{f}\left( \theta  \right))-\frac{\epsilon}{2}$. Therefore we have $\lambda_j(P_2)-\lambda_j(P_1)\leq \epsilon.$ 
Hence we get $\left|\lambda_i(P_2)-\lambda_i(P_1)\right|\leq \epsilon$ for every $i=1,2,\ldots p-1.$
\begin{equation}\nonumber
\hbox{Therefore,}\left| \hbox{trace}(P_1)-\hbox{trace}(P_2)\right|=\left|v_1-v_p\right|\leq (p-1)\epsilon. 
\end{equation}
Also since $v_1\leq v_2 \ldots \leq v_p,$ we get the desired conclusion with $c=v_j$ for some $j$.
\end{proof}
 \begin{remark}
The converse of the above result need not be true. That means if there exist $c\in \mathbb{R}$
such that $\displaystyle{\sup_{k\in \mathbb{Z}}|v_k-c|\leq \epsilon}$, then spectral gaps need not be small enough.
\end{remark}
\begin{remark}
Notice that we can not remove the assumption that $v_1\leq v_2\leq \ldots \leq v_p,$ since here we do not have the freedom to choose different symbols. 
\end{remark}
\section{Spectral Approximation Problems}\label{appl}
Consider a discrete  Schr$\ddot{\mbox{o}}$dinger operator $A$ with non-periodic potential.
Further assume that there exists a sequence of periodic discrete  Schr$\ddot{\mbox{o}}$dinger operators $A_n$ converges 
to $A$ in norm. Approximating the spectrum of $A$ using the spectral information of $A_n$ is an important problem in 
Mathematical Physics. In this section, we construct such approximations and sketch a different proof of the well-known
Ten Martini conjecture in view of the results discussed in the previous sections. We expect that a rigorous 
proof can be obtained by developing this sketch in the recent future. The numerical algorithms used in \cite{Embree}
to approximate the spectrum of quasiperiodic Jacobi operators give a positive sign in this direction.

The following two lemmas establish the connection between spectrum
of $A_n$ and $A$. These results are folklore and we present it here for the sake of completeness. $BL(\mathbb{H})$ denotes 
the algebra of all bounded linear operators on the Hilbert space $\mathbb{H}$. 
\begin{lemma}
Let $A_n\in BL(\mathbb{H})$ and $A_n\rightarrow A$ in $BL(\mathbb{H})$.
If for each $n\in \mathbb{N},$ $\lambda_n\in\sigma(A_n)$ and $\lambda_n\rightarrow \lambda$ as $n\rightarrow \infty,$
then $\lambda\in \sigma(A)$.
\end{lemma}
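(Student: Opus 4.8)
The plan is to argue by contradiction, exploiting the fact that the set of invertible operators in $BL(\mathbb{H})$ is open. Suppose, for contradiction, that $\lambda \notin \sigma(A)$. Then $\lambda I - A$ is invertible with bounded inverse, and I would set $\delta := \|(\lambda I - A)^{-1}\|^{-1} > 0$, which measures how far one may perturb $\lambda I - A$ while remaining in the invertible set.

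The key tool is the standard perturbation estimate: if $C \in BL(\mathbb{H})$ is invertible and $B \in BL(\mathbb{H})$ satisfies $\|B - C\| < \|C^{-1}\|^{-1}$, then $B$ is invertible. I would establish this by writing $B = C\bigl(I - C^{-1}(C - B)\bigr)$ and noting that $\|C^{-1}(C - B)\| \leq \|C^{-1}\|\,\|C - B\| < 1$, so that $I - C^{-1}(C - B)$ is invertible via its Neumann series; hence $B$ is a product of invertible operators and is therefore invertible.

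Applying this estimate with $C = \lambda I - A$ and $B = \lambda_n I - A_n$, the triangle inequality yields
\[
\|(\lambda_n I - A_n) - (\lambda I - A)\| \leq |\lambda_n - \lambda| + \|A_n - A\|,
\]
and since $\lambda_n \to \lambda$ in $\mathbb{C}$ and $A_n \to A$ in norm, the right-hand side tends to $0$. Thus for all sufficiently large $n$ we have $\|(\lambda_n I - A_n) - (\lambda I - A)\| < \delta$, which forces $\lambda_n I - A_n$ to be invertible, i.e. $\lambda_n \notin \sigma(A_n)$. This contradicts the hypothesis that $\lambda_n \in \sigma(A_n)$, and hence $\lambda \in \sigma(A)$. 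The argument is entirely elementary and there is no genuine obstacle beyond combining the two convergences correctly inside the single perturbation bound; the only point meriting slight care is that both $|\lambda_n - \lambda|$ and $\|A_n - A\|$ must be driven below $\delta$ simultaneously, which the triangle inequality above handles in one step.
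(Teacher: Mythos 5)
Your proof is correct and is essentially the same argument as the paper's: both rest on the norm convergence $\|(\lambda_n I - A_n) - (\lambda I - A)\| \leq |\lambda_n - \lambda| + \|A_n - A\| \to 0$ combined with the topological fact that the invertible operators form an open set (equivalently, the singular operators form a closed set), which the paper cites directly while you prove it inline via the Neumann series. The contradiction framing versus the paper's direct ``limit of singular operators is singular'' phrasing is only a cosmetic difference.
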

\begin{proof}
We have $\left\|A_n-\lambda_nI-(A-\lambda I)\right\|\leq \left\|A_n-A\right\|+ |\lambda_n - \lambda|\rightarrow 0$ as $n\rightarrow \infty.$ Therefore the sequence of singular operators $A_n-\lambda_nI$ converges to $A-\lambda I$. Since the singular elements in $BL(\mathbb{H})$ is a closed set we have $A-\lambda I$ is not invertible. Hence $\lambda\in \sigma(A)$.
\end{proof}
The following lemma shows that the converse is also true whenever we consider a sequence of normal operators on $BL(\mathbb{H})$.
\begin{lemma}
Let $A_n\in BL(\mathbb{H})$ and $A_n\rightarrow A$ in $BL(\mathbb{H})$. Further assume that each $A_n$ are normal.
Then the spectrum  of $A_n$ converges to the spectrum of $A$. That is the set
$\sigma(A) = \{\lambda;\lambda =\lim_{n\rightarrow \infty}\lambda_n;\; \lambda_n\in \sigma(A_n)\}$.
\end{lemma}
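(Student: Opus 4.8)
The plan is to prove the exact two-sided inclusion
$\sigma(A) = \{\lambda = \lim_{n\to\infty}\lambda_n : \lambda_n \in \sigma(A_n)\}$,
from which the stated Lebesgue-measure-zero conclusion follows immediately; in fact the exceptional set will turn out to be empty, so the result is slightly stronger than stated. First I would note that, being a norm limit of self-adjoint operators, $A$ is itself self-adjoint, so $\sigma(A) \subseteq \mathbb{R}$ and the resolvent-norm identity valid for normal operators is available to us. One inclusion is already in hand: the preceding lemma shows that whenever $\lambda_n \in \sigma(A_n)$ and $\lambda_n \to \lambda$, then $\lambda \in \sigma(A)$, so the limit set on the right is contained in $\sigma(A)$. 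The entire content of the lemma is therefore the reverse inclusion.

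For the reverse inclusion, fix $\lambda \in \sigma(A)$; the goal is to produce $\lambda_n \in \sigma(A_n)$ with $\lambda_n \to \lambda$. The key input is that for a self-adjoint (more generally normal) operator $B$ one has $\|(B - z)^{-1}\| = 1/\mbox{dist}(z,\sigma(B))$ for every $z \notin \sigma(B)$. Using this I would show $\mbox{dist}(\lambda,\sigma(A_n)) \to 0$ by contradiction: if along some subsequence $\mbox{dist}(\lambda,\sigma(A_{n_k})) \geq \delta > 0$, then $\|(A_{n_k} - \lambda)^{-1}\| \leq \delta^{-1}$ uniformly in $k$, while $\|(A_{n_k} - \lambda) - (A - \lambda)\| = \|A_{n_k} - A\| \to 0$. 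A Neumann-series argument (equivalently, openness of the invertibles together with continuity of inversion, as already used in the previous lemma) then forces $A - \lambda$ to be invertible for large $k$, contradicting $\lambda \in \sigma(A)$.

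Once $\mbox{dist}(\lambda,\sigma(A_n)) \to 0$ is established, each $\sigma(A_n)$ is closed, so I may choose $\lambda_n \in \sigma(A_n)$ realizing the distance; then $|\lambda_n - \lambda| = \mbox{dist}(\lambda,\sigma(A_n)) \to 0$, that is, $\lambda_n \to \lambda$, and hence $\lambda$ belongs to the limit set. This yields $\sigma(A) \subseteq \{\lim \lambda_n\}$, hence equality, so $\sigma(A) \smallsetminus \{\lambda = \lim_{n\to\infty}\lambda_n : \lambda_n \in \sigma(A_n)\}$ is empty and in particular has Lebesgue measure $0$. The main obstacle is precisely the contradiction step, where self-adjointness is indispensable: for a general (non-normal) operator the resolvent norm can be enormous far from the spectrum — exactly the pseudospectral phenomenon studied in the rest of this paper — so the uniform bound $\|(A_{n_k} - \lambda)^{-1}\| \leq \delta^{-1}$ would fail and the argument would collapse. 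This is the conceptual reason the hypothesis of self-adjointness cannot be dropped here, in contrast to the one-sided inclusion of the preceding lemma, which needs no such assumption.
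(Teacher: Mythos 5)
Your proof is correct, but it takes a genuinely different route from the paper's. The paper invokes the spectral theorem to write each $A_n$ as a multiplication operator with representer $f_n$, identifies $\sigma(A_n)$ with the essential range of $f_n$, asserts $\|A_n-A\|=\|f_n-f\|_\infty$, and concludes from $f_n\to f$ almost everywhere; that argument implicitly requires a single unitary to diagonalize all the $A_n$ and $A$ simultaneously (which holds only when they commute), so as written it is sketchy, and its conclusion is only the measure-zero statement. Your route, via the identity $\|(B-\lambda)^{-1}\|=1/\mathrm{dist}(\lambda,\sigma(B))$ for self-adjoint $B$ combined with a Neumann-series perturbation argument, avoids any representation-theoretic step, is fully rigorous, and proves strictly more: the exceptional set is empty, i.e. $\sigma(A)$ coincides exactly with the set of limits of spectral points of the $A_n$, which moreover gives the Hausdorff-distance convergence $d_H(\sigma(A_n),\sigma(A))\to 0$ that the paper states separately as a corollary. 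Two small points you should make explicit to be airtight: first, when you pick $\lambda_n\in\sigma(A_n)$ realizing $\mathrm{dist}(\lambda,\sigma(A_n))$, you are using that each $\sigma(A_n)$ is a non-empty compact subset of $\mathbb{C}$, which is guaranteed for bounded operators on a non-trivial complex Hilbert space; second, your contradiction step produces, along the subsequence, invertibility of $A-\lambda I$ from the uniform resolvent bound $\delta^{-1}$ together with $\|A_{n_k}-A\|<\delta$ for large $k$, and it is worth writing the factorization $A-\lambda I=(A_{n_k}-\lambda I)\bigl(I+(A_{n_k}-\lambda I)^{-1}(A-A_{n_k})\bigr)$ so the Neumann-series hypothesis is visibly satisfied.
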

\begin{proof}
Since each $A_n$ is normal, $A$ is also normal. Therefore every spectral value is an approximate eigenalue.
That means $\lambda \in \sigma(A)$ if and only if there exists a sequence $x_n\in \mathbb{H}$ ( Weyl sequence) 
with $\rVert x_n\Arrowvert=1$ and
$\rVert Ax_n-\lambda x_n\Arrowvert \rightarrow 0$ as $n\rightarrow \infty.$ Now suppose that $\lambda \in \sigma(A)$ and $x_n$
be the corresponding Weyl sequence.
Then by Krylov-Weinstein inequality (see Proposition $1.41 (c) $ in \cite{Lim} for eg.), there exists a sequence
$\lambda_n \in \sigma(A_n)$ such that
\[
 \lvert \lambda_n - \lambda \rvert\leq \rVert A_nx_n-\lambda x_n\Arrowvert \,\,\,\,\textrm{for all} \,\,\,n.
\]
Therefore we have
\[
  \lvert \lambda_n - \lambda \rvert\leq \rVert A_nx_n-Ax_n+Ax_n -\lambda x_n\Arrowvert \leq  \rVert A_nx_n-Ax_n\Arrowvert+
  \rVert Ax_n-\lambda x_n\Arrowvert 
\]
Since $\rVert A_nx_n-Ax_n \Arrowvert \leq \rVert A_n-A \Arrowvert$ and 
$\rVert A_n-A\Arrowvert, \rVert Ax_n-\lambda x_n\Arrowvert$
tends to $0$ as $n \rightarrow \infty$, the right hand side of the above inequality tends to $0$ as $n \rightarrow \infty$.

Hence we get $\lambda_n \in \sigma(A_n)$ with $\lambda=\lim_{n\rightarrow \infty}\lambda_n.$ Hence the proof is completed.

\end{proof}
\begin{corollary}
Let $A_n\in BL(\mathbb{H})$ and $A_n\rightarrow A$ in $BL(\mathbb{H})$. Further assume that each $A_n$ are self-adjoint. Then the Hausdorff distance $d_H(\sigma(A), \sigma(A_{n}))$ between the spectrum of $A$ and the spectrum of $A_n$ converges to $0$ as $n$ tends to infinity.
\end{corollary}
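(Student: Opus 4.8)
The plan is to deduce the corollary from the single elementary estimate that for any two bounded self-adjoint operators $B, C$ on $\mathbb{H}$ one has $d_H(\sigma(B), \sigma(C)) \leq \|B - C\|$, and then to apply it with $B = A$ and $C = A_n$, so that $d_H(\sigma(A), \sigma(A_n)) \leq \|A - A_n\| \to 0$ by the norm convergence $A_n \to A$. First I would recall that, since each $A_n$ and the limit $A$ are bounded and self-adjoint, their spectra are non-empty compact subsets of $\mathbb{R}$, so the Hausdorff distance
\[
 d_H(\sigma(B), \sigma(C)) = \max\left\{\sup_{\lambda \in \sigma(B)} \mbox{dist}(\lambda, \sigma(C)),\ \sup_{\mu \in \sigma(C)} \mbox{dist}(\mu, \sigma(B))\right\}
\]
is well defined, and by symmetry it suffices to bound the first of the two directed distances.

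The key step is the resolvent identity for normal operators: for a self-adjoint $C$ and any $\lambda \notin \sigma(C)$, the spectral theorem for bounded self-adjoint operators (the same tool used in the preceding lemma) yields $\|(C - \lambda I)^{-1}\| = \mbox{dist}(\lambda, \sigma(C))^{-1}$. Fixing $\lambda \in \sigma(B)$, I would argue by contradiction: assume $\mbox{dist}(\lambda, \sigma(C)) > \|B - C\|$. Then $\lambda \notin \sigma(C)$ and $\|(C - \lambda I)^{-1}\|\,\|B - C\| < 1$, so, writing $B - \lambda I = (C - \lambda I)\bigl[I + (C - \lambda I)^{-1}(B - C)\bigr]$ and invoking the Neumann series, the bracketed factor is invertible and hence so is $B - \lambda I$ — contradicting $\lambda \in \sigma(B)$. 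Thus $\mbox{dist}(\lambda, \sigma(C)) \leq \|B - C\|$ for every $\lambda \in \sigma(B)$; interchanging the roles of $B$ and $C$ gives the reversed direction, and together these yield $d_H(\sigma(B), \sigma(C)) \leq \|B - C\|$.

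Finally I would substitute $B = A$ and $C = A_n$: since $A_n \to A$ in operator norm we obtain $d_H(\sigma(A), \sigma(A_n)) \leq \|A - A_n\| \to 0$, which is exactly the assertion. The only genuinely non-routine ingredient is the resolvent-norm identity $\|(C - \lambda I)^{-1}\| = \mbox{dist}(\lambda, \sigma(C))^{-1}$, and this is precisely where self-adjointness (normality) is indispensable: for a general operator one can only control $\sup_{\mu \in \sigma(A_n)} \mbox{dist}(\mu, \sigma(A))$ (upper semicontinuity of the spectrum), whereas the reverse directed distance can fail, so self-adjointness is what upgrades one-sided semicontinuity to the full Hausdorff convergence. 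I would remark that this corollary thereby sharpens the two preceding lemmas, which separately encode the limit-point inclusion and the almost-everywhere convergence of the spectra.
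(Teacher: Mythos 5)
Your proof is correct, and it takes a genuinely different route from the paper. The paper states this corollary with no separate argument, deriving it implicitly from its two preceding lemmas: the closedness-of-singular-elements argument (limit points of $\sigma(A_n)$ lie in $\sigma(A)$) and the spectral-theorem argument in which each $A_n$ and $A$ are represented as multiplication operators $M_{f_n}$, $M_f$ with $\|A_n-A\|=\|f_n-f\|_\infty$, so that the spectra, being essential ranges, converge. Your argument instead establishes the quantitative Lipschitz estimate $d_H(\sigma(B),\sigma(C))\leq\|B-C\|$ for bounded self-adjoint $B,C$ directly, via the resolvent-norm identity $\|(C-\lambda I)^{-1}\|=\mbox{dist}(\lambda,\sigma(C))^{-1}$ and a Neumann-series perturbation step. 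This buys you two things the paper's route does not: an explicit rate ($d_H(\sigma(A),\sigma(A_n))\leq\|A-A_n\|$, not just convergence), and independence from the representer construction --- which in the paper is somewhat delicate, since the unitaries $U_n$ diagonalizing different $A_n$ need not agree, so the identity $\|A_n-A\|=\|f_n-f\|_\infty$ as written requires more care than the paper gives it. Your version is also cleanly self-contained: the only nonelementary input is the resolvent identity for normal operators, and your closing observation correctly isolates why self-adjointness is essential (for general operators only upper semicontinuity of the spectrum survives norm limits). One small point you gloss over: you assert that the limit $A$ is self-adjoint; this does follow immediately from $A^*=\lim A_n^*=\lim A_n=A$ by norm-continuity of the adjoint, but it is used in both directed-distance bounds (you need \emph{both} operators normal to run the resolvent identity in each direction), so it deserves the one-line justification.
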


\subsection{Ten Martini Problem}

 The well-known Ten Martini conjecture asserted by Barry Simon states the following; If we consider the almost periodic
 Mathieu potential say, $v_j=cos(2\pi j \alpha)$, where $\alpha$ is an irrational number, 
 then the spectrum of the associated discrete Schr$\ddot{\text{\textnormal{o}}}$dinger operator is a Cantor-like set.
 This conjecture was settled and many modified proofs are also available in the literature (see \cite{Avila} and references there
 in for more details). Most of the proofs involve deep number theoretic techniques as far as we know. 
 One possible operator theoretic approach to this problem was mentioned in \cite{kvbafa} using the approximation
 by sequence of periodic Schr$\ddot{\text{\textnormal{o}}}$dinger operators. We explain this below.

 Consider a sequence of rational numbers  $\alpha_n$ converges to $\alpha$. 
 The operators $A_n$ with potential $v_j^n=cos(2\pi j \alpha_n)$ will converge to the almost Mathieu operator $A$. 
 Even though $A$ is not periodic, each $A_n$ is periodic and will have spectral gaps for each $n$. 
 If the size of spectral gaps of $A_n$ decreases and number of gaps increases as $n$ increases, then we have large number
 of smaller spectral gaps for large $n$. The spectrum of $A_n$ will be obtained after removing these spectral gaps
 from the interval. As in the construction of Cantor set, after each stage we are removing more number of open intervals. 
 Proceeding in this way, we expect a much simpler proof for Ten Martini conjecture.

 The approximation by sequence of periodic Schr$\ddot{\text{\textnormal{o}}}$dinger operator is a crucial step in this approach. 
 How to choose an appropriate sequence is a challenging issue. We can choose the approximating
 sequence $A_n$ with periodic potentials $v_k^n$ in such a way that the sequence of periods $p_n$ 
 non decreasing. This can be proved without much difficulty. We end this section by proving this.
 
 \begin{theorem}
 Let $A$ be the discrete Schr$\ddot{o}$dinger operator with almost Mathieu potential $cos(2\pi j\alpha); \alpha$ irrational. Then there exists a sequence $A_n$ of discrete Schr$\ddot{o}$dinger operators with periodic potential $v_j^n$ such that $\left\|A_n-A\right\|\rightarrow 0$ as $n\rightarrow \infty$ and the sequence $p_n$ of periods of $v_j^n$ is non decreasing.
 \end{theorem}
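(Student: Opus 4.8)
The plan is to build the approximating operators from rational truncations of the rotation number and to extract norm convergence from the diagonal structure of the difference. Since every operator in sight is a discrete Schr\"odinger operator with the \emph{same} off-diagonal part (the bilateral shift $x_n\mapsto x_{n-1}+x_{n+1}$), the difference $A_n-A$ is the multiplication (diagonal) operator with entries $v_j^n-\cos(2\pi j\alpha)$. On $\ell^2(\mathbb{Z})$ the norm of such a diagonal operator equals the supremum of the moduli of its entries, so
\[
 \|A_n-A\|=\sup_{j\in\mathbb{Z}}\bigl|v_j^n-\cos(2\pi j\alpha)\bigr|.
\]
Thus the whole statement reduces to producing $p_n$-periodic potentials $v^n$ with non-decreasing periods $p_n$ for which this supremum tends to $0$.

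For the construction I would take the continued-fraction convergents $a_n/b_n\to\alpha$, set $\alpha_n=a_n/b_n$ and $v_j^n=\cos(2\pi j\alpha_n)$, which is periodic with period $p_n=b_n$. The denominators $b_n$ of the convergents are strictly increasing, so the period requirement (non-decreasing $p_n$) is immediate; this is the easy half of the statement. For the analytic half I would use the Lipschitz bound $|\cos(2\pi x)-\cos(2\pi y)|\leq 2\pi\|x-y\|$, where $\|\cdot\|$ is distance to the nearest integer, together with the Diophantine quality of convergents, $|\alpha-a_n/b_n|<1/b_n^{2}$. On the fundamental window $|j|\leq b_n$ this yields $|v_j^n-\cos(2\pi j\alpha)|\leq 2\pi|j|\,|\alpha-\alpha_n|<2\pi/b_n\to 0$, so the two potentials agree to within $o(1)$ across one full period.

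The main obstacle is to upgrade this \emph{local} estimate to the uniform supremum over all of $\mathbb{Z}$: for $|j|\gg b_n$ the phases $j\alpha$ and $j\alpha_n$ drift apart and the crude bound $2\pi|j|\,|\alpha-\alpha_n|$ becomes useless. The quantity that governs the drift is the almost-period $\|b_n\alpha\|$, which for convergents satisfies $\|b_n\alpha\|<1/b_{n+1}$; writing $j=r+mb_n$ with $r$ in a centered fundamental domain gives $|v_j^n-\cos(2\pi j\alpha)|\leq 2\pi\|m\,b_n\alpha\|\leq 2\pi|m|\,\|b_n\alpha\|$, so the accumulated error is controlled by how fast $\|b_n\alpha\|$ decays relative to the range of $m$. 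Closing this estimate uniformly in $j$ — by choosing the approximants (and, if needed, replacing the global formula $\cos(2\pi j\alpha_n)$ by a suitably windowed periodic extension) so that the drift stays small while $b_n$ remains non-decreasing — is the crux of the argument and the step demanding the most care.
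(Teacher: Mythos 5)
Your construction coincides with the paper's own: the paper likewise takes rationals $\alpha_n\to\alpha$, sets $v_j^n=\cos(2\pi j\alpha_n)$, and arranges the denominators to be non-decreasing (your use of continued-fraction convergents, whose denominators strictly increase, is in fact cleaner than the paper's subsequence extraction, and your identification of $b_n$ as the period is the correct bookkeeping --- the paper's Claim I asserts minimal period $b_n+1$, but its computation $v^n_{b_n+1}=v^n_1$ actually verifies the shift by $b_n$). The difference is that the paper disposes of the analytic half with ``it is easy to verify that $\|A_n-A\|\to 0$,'' while you honestly attempt it and flag the globalization of the local estimate as the unclosed crux. That is a genuine gap in your proposal, and your suspicion is exactly right: the step fails, and not for lack of care. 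Take $j=mb_n$: then $v_j^n=\cos(2\pi m a_n)=1$, whereas $\cos(2\pi j\alpha)=\cos(2\pi m b_n\alpha)$ with $b_n\alpha$ irrational, so $\{mb_n\alpha \bmod 1\}$ is dense in $[0,1)$ and $\cos(2\pi m b_n\alpha)$ comes arbitrarily close to $-1$. Hence
\[
\|A_n-A\|=\sup_{j\in\mathbb{Z}}\bigl|v_j^n-\cos(2\pi j\alpha)\bigr|=2 \quad\text{for every } n,
\]
i.e., the drift you isolate in the decomposition $j=r+mb_n$ is not merely hard to control; it attains the maximal mismatch at the almost-periods themselves.

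The escape hatch you mention (a windowed periodic extension, or a smarter choice of approximants) cannot work either, for a structural reason: the Bohr mean $M(u):=\lim_{N\to\infty}\frac{1}{2N+1}\sum_{|j|\le N}u_j e^{-2\pi i j\alpha}$ is continuous in $\|\cdot\|_{\ell^\infty(\mathbb{Z})}$, vanishes on every periodic sequence (a $p$-periodic sequence has frequencies $k/p$, and $k/p-\alpha$ is never an integer since $\alpha$ is irrational), yet equals $\tfrac12$ on the Mathieu potential. Therefore every periodic potential lies at $\ell^\infty$-distance at least $\tfrac12$ from $\cos(2\pi j\alpha)$, so $\|A_n-A\|\ge\tfrac12$ for \emph{any} sequence of periodic discrete Schr\"odinger operators: norm convergence as demanded by the statement is impossible, and only strong convergence (on finitely supported vectors, using your correct local bound $2\pi|j|\,|\alpha-\alpha_n|$) survives. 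So your proposal is incomplete at precisely the step you identify, but the incompleteness is diagnostic rather than remediable --- the paper's proof conceals the same hole behind its ``easy to verify,'' and your blind attempt has in effect located a substantive error in the theorem as stated.
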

 \begin{proof}
 Consider a sequence $\alpha_n$ of rationals converges to $\alpha$ and let $A_n$ be the discrete 
 Schr$\ddot{\text{\textnormal{o}}}$dinger operator with potential $v_j^n=cos(2\pi j\alpha_n).$ Then it is easy to verify that
 $\left\|A_n-A\right\|\rightarrow 0$ as $n\rightarrow \infty.$ We will show that the sequence  $\alpha_n$ can be chosen in such
 a way that sequence $p_n$ of periods of $v_j^n$ is non decreasing.
 
 \textbf{ \underline{Claim I:  }} If $\alpha_n=\frac{a_n}{b_n},$ then the minimal period of $v_j^n$ is $b_n+1.$\\
 It is easy to verify that $b_n+1$ is a period of $v_j^n.$ For we have,
 \[
 v_{b_n+1}^n=cos(2\pi(b_n+1)\frac{a_n}{b_n})=cos(2\pi a_n+2\pi \frac{a_n}{b_n})=cos(2\pi \frac{a_n}{b_n})=v_1^n.
 \]
 To show that $b_n+1$ is the minimal period, we list the sequence as $v_1^n, v_2^n, \ldots v_{b_n+1}^n=v_1^n.$ If there exists some $j$ with $2\leq j<b_{n}+1,$ and $v_1^n=v_j^n,$ then $cos(2\pi \frac{a_n}{b_n})=cos(2\pi j\frac{a_n}{b_n}).$ Therefore, $2\pi j\frac{a_n}{b_n}=2\pi \frac{a_n}{b_n}+2\pi k,$ for some natural number $k.$ Hence $j\frac{a_n}{b_n}=\frac{a_n}{b_n}+k$ and $(j-1)\frac{a_n}{b_n}=k, j=2,3,\ldots b_n.$ That means the set $\{\frac{a_n}{b_n}, 2\frac{a_n}{b_n},\ldots (b_n-1)\frac{a_n}{b_n}\}$ contains at least one natural number. This is not possible since $a_n$ and $b_n$ have no common factors.
  
\textbf{ \underline{Claim II:  }} $\alpha_n=\frac{a_n}{b_n},$ can be chosen in such a way that $b_n\leq b_{n+1}.$\\
Consider $\{\frac{1}{b_1}, \frac{1}{b_2},\ldots \}$ then it is a bounded sequence and therefore it has a convergent subsequence say $(\frac{1}{{b_n}^{'}}).$ Also, it has a monotonically decreasing subsequence which we denote by the same notation $(\frac{1}{{b_n}^{'}}).$ Now we consider $(\frac{{a_n}^{'}}{{b_n}^{'}})$ and obtain the desired conclusion. 
 \end{proof}
\begin{remark} 
We remark that using the above results, we are able to approximate almost Mathieu operator by a sequence of discrete
Schr$\ddot{\text{\textnormal{o}}}$dinger operators with non decreasing period. Once we are able to connect the number
of spectral gaps with the periodicity (that means if the number of spectral gaps increases as the periodicity increases),
then there is a possibility to obtain solution to Ten Martini problem in the method discussed above. 
\end{remark} 

\section{Concluding Remarks}
We conclude this article by pointing out some of the important aspects of the considered problem and listing out some
possible future problems. The results proved in section \ref{mainres} are the analogue of the results
in \cite{kiran} and \cite{KSN1}. In those articles only operators with connected spectrum were considered while here
 the operators under consideration are allowed to have spectral gaps.
 In section \ref{furthergen}, we could achieve much general and stronger results. 
 The major ingredients in the proofs are pure linear algebra and therefore the techniques are much simpler. 
 The connection between the main results and the Ten Martini problem discussed in section \ref{appl} pauses a major challenge;
 that is to obtain a much simpler proof of Ten Martini conjecture. Notice that this well-known operator theoretic question 
 has been addressed by several mathematicians and most of the proofs involve deep number theoretic techniques as far as we know. 
 The goal is to use the pure linear algebraic tools to prove this conjecture. 
 Here is the list of some open problems associated with this task.
\begin{itemize}
	\item Estimating the size of spectral gaps is a very important as well as challenging problem.
	\item If we can show that the number of spectral gaps increases as the periodicity increases,
	that will be a useful result. There is an affirmative example discussed in \cite{kiran} where the number of gaps is $p-1$
	($p$ is the period).
	\item The continuous case has to be addressed with the tools developed here. 
	Also, the pseudospectral version of the classical Borg's theorem is an interesting problem for future research.
\end{itemize}
 \subsection*{Acknowledgements}
The authors wish to thank Prof. M.N.N. Namboodiri and the anonymous referee for valuable suggestions.
V.B. Kiran Kumar is thankful to the University Grant Commission for the start-up research grant. 
 G. Krishna Kumar is thankful to the FLAIR programme of Higher Education Department, Govt. of Kerala and to Cochin University
 of Science And Technology (CUSAT) for supporting his visits to Department of Mathematics, CUSAT.
\bibliographystyle{amsplain}

\end{document}